\documentclass[11pt]{article}
\usepackage{amsmath,amsthm,amssymb,amsfonts, mathrsfs, mathtools,
  enumitem, xcolor,bbm, tikz,tikz-cd}

\usepackage[ruled,vlined]{algorithm2e}
\usepackage{physics,hyperref, cleveref}
\usepackage[title]{appendix}

\usepackage[final]{microtype}
\usepackage{float}

\SetKwInput{KwInput}{Input}
\SetKwInput{KwOutput}{Output}

\usepackage[
    backend=biber,
    style=alphabetic,
    natbib=true,
    url=false, 
    doi=true,
    eprint=false,
    backref=true
    ]{biblatex}
% \DeclareFieldFormat{titlecase}{\MakeSentenceCase{#1}}

\addbibresource{references.bib}

\usepackage[right= 2.5cm, left=2.5cm, top= 2.5cm, bottom=1.9cm]{geometry}

% \crefname{algorithm}{alg.}{algs.}
% \Crefname{algorithm}{Algorithm}{Algorithms}

\let\mbb\mathbb
\let\mc\mathcal

\let\mf\mathfrak
\let\mbf\mathbf

\newcommand{\1}{\mathbbm 1}
\newcommand{\N}{\mathbb{N}}
\newcommand{\Z}{\mathbb{Z}}
\newcommand{\Q}{\mathbb{Q}}
\newcommand{\R}{\mathbb{R}}
\newcommand{\h}{\mf h}
\newcommand{\C}{\mathbb{C}}

\renewcommand{\O}{\mc O}

\newcommand{\floor}[1]{\left\lfloor #1 \right\rfloor}

\newcommand*{\defeq}{\mathrel{\vcenter{\baselineskip0.5ex \lineskiplimit0pt
      \hbox{\scriptsize.}\hbox{\scriptsize.}}}%
  =}

\renewcommand\labelenumi{\textnormal{(\roman{enumi})}}
\renewcommand\theenumi\labelenumi

\renewcommand{\bar}{\overline}
\renewcommand{\tilde}{\widetilde}

\DeclareMathOperator{\cyc}{cyc}

\DeclareMathOperator{\sgn}{sgn}
\DeclareMathOperator{\Sl}{Sl}
\DeclareMathOperator{\SL}{SL}

\DeclareMathOperator{\Cl}{Cl}

\DeclareMathOperator{\Mat}{Mat}
\DeclareMathOperator{\Stab}{Stab}

\DeclareMathOperator{\Hom}{Hom}
\usepackage{calligra}

\DeclareMathOperator{\ord}{ord}
\DeclareMathOperator{\Gal}{Gal}

\theoremstyle{plain}
\newtheorem{thm}{Theorem}[section]
\newtheorem*{thm*}{Theorem}

\newtheorem{cor}[thm]{Corollary}

\newtheorem{lemma}[thm]{Lemma}
\newtheorem{prop}[thm]{Proposition}

\theoremstyle{definition}

\newtheorem{remark}[thm]{Remark}

\newtheorem{example}[thm]{Example}
\newtheorem{non-example}[thm]{Non-example}
\theoremstyle{remark}

\numberwithin{equation}{section}

\hypersetup{
  colorlinks,
  linkcolor={red!50!black},
  citecolor={purple!45!black},
  urlcolor={green!40!black}
}

\begin{document}
\title{Modular algorithms for Gross--Stark units and Stark--Heegner points}
\author{H\aa vard Damm-Johnsen\thanks{\href{mailto:havard.damm-johnsen@maths.ox.ac.uk}{havard.damm-johnsen@maths.ox.ac.uk}}
}

\date{}
\maketitle
\begin{abstract}
  In recent work, Darmon, Pozzi and Vonk explicitly construct a
  modular form whose spectral coefficients are $p$-adic logarithms of
  Gross--Stark units and Stark--Heegner points.  Here we describe how
  this construction gives rise to a practical algorithm for explicitly
  computing these logarithms to specified precision, and how to
  recover the exact values of the Gross--Stark units and Stark--Heegner
  points from them.

  Key tools are overconvergent modular forms, reduction theory of
  quadratic forms and Newton polygons.  As an application, we tabulate
  Brumer--Stark units in narrow Hilbert class fields of real quadratic
  fields with discriminants up to $10000$, for primes less than $20$,
  as well as Stark--Heegner points on elliptic curves. 
\end{abstract}
\tableofcontents
\section{Introduction}
The classical theory of complex multiplication, developed by
Kronecker, Weber, Fueter, Deuring, Shimura and others, gives an
explicit description of abelian extensions of imaginary quadratic
fields $K$. They are generated by \emph{elliptic units}, which are
canonical units in class fields of $K$. In \cite{stark1980}, Stark
proved that logarithms of elliptic units appear as special values of
derivatives of Hecke $L$-functions, and conjectured the existence of
units over arbitrary base fields, so-called \emph{Stark units}.  Heegner
and Birch used CM theory to construct points on modular curves, called
\emph{Heegner points}, also defined over class fields. By mapping
these to elliptic curves, Gross and Zagier \cite{gross1986}
made important contributions towards the BSD conjecture.
 
Let $F$ be a real quadratic field and $p$ a rational prime. While
there is no direct analogue of the construction of elliptic units over
$F$, Gross \cite{gross1981} constructed what is now known as
\emph{Gross--Stark units}, formal powers of units in class fields of
$F$, and formulated an analogue of Stark's conjectures for these. His
conjecture related special values of derivatives of $p$-adic
$L$-functions to local norms of Gross--Stark units, and was proved in
\cite{dasgupta2011}. Recent work of Dasgupta and Kakde
\cite{dasgupta2023} on the Brumer--Stark conjecture refines this by
removing the norm. The computation of Gross--Stark units over quadratic
fields has been studied in \cite{tangedal2013} when $p$ splits in $F$,
and \cite{dasgupta2007a}, \cite{dasgupta2021}, and
\cite{fleischer2022} for $p$ inert.

By analogy with Heegner points, Darmon's work \cite{darmon2001} uses
$p$-adic analysis to construct points on elliptic curves. These
so-called \emph{Stark--Heegner points} are conjectured to be defined
over ring class fields of $F$. While this conjecture is still wide
open in general, it is supported by extensive computations. Algorithms
for computing Stark--Heegner points are given in \cite{darmon2006b},
\cite{guitart2015} and \cite{darmon2021a}.

In the last reference cited, Darmon and Vonk use the theory of rigid
analytic cocycles to provide a common framework for Gross--Stark units
and Stark--Heegner points. Their framework also gives an analogue of
singular moduli for real quadratic fields, for which the techniques in
this paper are expected to generalise. Rigid analytic cocycles are
used in subsequent work of Darmon, Pozzi and Vonk \cite{darmon2021b}
to show that diagonal restrictions of certain $p$-adic families of
Hilbert modular forms gives an explicitly computable modular form
whose spectral expansion contains both Gross--Stark units and
Stark--Heegner points.

More specifically, the authors construct a classical modular form $G$
from a parallel weight $1$ Hilbert Eisenstein series $E_{1,1}$ over
$F$ as follows: first, they define the \emph{anti-parallel weight
  deformation} of $E_{1,1}$, modify by a linear combination of
Eisenstein families, restrict the argument to the diagonally embedded
$\mf h$ in $\mf h \times \h$, and take the first order derivative of the
family. This is shown to be a $p$-adic modular form, to which they
finally apply Hida's ordinary projector to get the modular form $G$.
A consequence of the main theorems in \cite{darmon2021} and
\cite{darmon2021b} along with a conjecture in \cite[\S 3]{darmon2020}
which relates values of cocycles attached to cuspidal eigenforms to
Stark--Heegner points, is the following:

\begin{thm}\label{thm:vague-theorem}
  The form $G \in M_{2}(\Gamma_{0}(p))$ is non-zero if $F =
  \Q(\sqrt D)$ has no unit of negative norm, and satisfies
  \begin{equation*}
    \left\langle G,f \right\rangle_{\Gamma_{0}(p)}  =
    \begin{cases}\frac{1}{p-1}\log_{p} u &\text{ if } f =
      E_{2}^{(p)}, \\
      L_{\mathrm{alg}}(1,f)\log_{E_{f}} P_{f} &\text{ if } f \text{ is a cuspidal eigenform with
      rational coefficients}.
    \end{cases}
  \end{equation*}
\end{thm}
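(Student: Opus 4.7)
The plan is to prove the identity by computing the Petersson inner product of $G$ with each eigenform appearing in a basis of $M_2(\Gamma_0(p))$, then appealing to the two main inputs cited: the explicit identification of the diagonal restriction derivative in \cite{darmon2021b}, and the conjecture from \cite[\S 3]{darmon2020} relating cocycle values for cuspidal eigenforms to Stark--Heegner points.

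The first step is to decompose $M_2(\Gamma_0(p))$ under the Hecke action. Since $G$ lies in the image of Hida's ordinary projector, only the ordinary part contributes; a basis is given by $E_2^{(p)}$ (the unique $p$-new Eisenstein series of weight $2$ and level $p$) together with a Hecke-eigenbasis of cuspforms. Writing $G = \sum_{f} c_f \, f / \langle f, f \rangle_{\Gamma_0(p)}$, the claim reduces to identifying each coefficient $c_f = \langle G, f\rangle_{\Gamma_0(p)}$ by specialising the main result of \cite{darmon2021b}: the first-order $q$-expansion coefficients of the diagonal restriction derivative of the anti-parallel deformation of $E_{1,1}$ are expressed in terms of RM values of the rigid meromorphic cocycles attached to $f$, and after Hida projection these values become the spectral coefficients $c_f$.

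For the Eisenstein component, pairing against $E_2^{(p)}$ and using the computation of the constant term of the diagonal restriction derivative in \cite{darmon2021b} yields $\frac{1}{p-1}\log_p u$, where $u$ is the Gross--Stark / Brumer--Stark unit associated to $F$ at $p$, by the explicit formula for the derivative of the Eisenstein family. For a cuspidal eigenform $f$ with rational coefficients, the spectral coefficient is identified via the Darmon--Vonk construction with the value of the rigid cocycle attached to $f$ at an RM point of discriminant related to $D$; invoking the conjecture of \cite[\S 3]{darmon2020} (which is the conditional input) converts this RM value into $\log_{E_f} P_f$ for a Stark--Heegner point $P_f$ on the strong Weil curve $E_f$. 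The factor $L_{\mathrm{alg}}(1,f)$ appears from the normalisation of the Petersson pairing against the newform together with the period relation in the modular parametrisation $X_0(p) \to E_f$.

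Finally, the non-vanishing assertion follows from the Eisenstein component alone: when $F = \Q(\sqrt{D})$ has no unit of negative norm, the Gross--Stark unit $u$ constructed in \cite{dasgupta2023} (or rather its image under $\log_p$) is non-trivial, forcing the coefficient of $E_2^{(p)}$ in the spectral decomposition of $G$ to be non-zero, and hence $G \neq 0$. The hardest step, and the reason the theorem is ultimately conditional, is the cuspidal case, which rests entirely on the conjecture of \cite{darmon2020}; the Eisenstein case and non-vanishing are unconditional consequences of \cite{darmon2021b} and \cite{dasgupta2023}.
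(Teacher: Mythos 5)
Your proposal leans on the right external inputs (the explicit results of \cite{darmon2021b} plus Conjecture 3.19 of \cite{darmon2020}), but the mechanism you describe for extracting the coefficients is not the one that works, and it has two concrete gaps. The paper does not compute Petersson products at all: it uses Prop.~4.7 of \cite{darmon2021b} to write $2G(z)=\log_p(u_\psi)+\sum_{n\ge 1}\log_p\bigl(T_nJ_w[\psi]\bigr)q^n$, then the decomposition of the winding cocycle $J_w\equiv \tfrac{2}{p-1}J_{\mathrm{DR}}+2\sum_f L_{\mathrm{alg}}(1,f)J_f^-$ (eq.~29 of \cite{darmon2021b}), Hecke equivariance to replace $\log_p T_nJ[\psi]$ by $a_n(E_2^{(p)})$ resp.\ $a_n(f)$ times $\log_p J[\psi]$, Theorem B ($J_{\mathrm{DR}}[\psi]=u_\psi^{24}$) and the conjecture of \cite{darmon2020} for the cuspidal values, and finally an oldform argument: a form in $M_2(\Gamma_0(p))$ all of whose coefficients of index prime to $p$ vanish is old, hence zero, so matching the prime-to-$p$ coefficients pins down the spectral expansion. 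Against this, (a) your claim that the factor $L_{\mathrm{alg}}(1,f)$ ``appears from the normalisation of the Petersson pairing \ldots together with the period relation in the modular parametrisation'' misattributes its source: in the eigenbasis expansion $G=\lambda_0E_2^{(p)}+\sum_f\lambda_ff$ no normalising Petersson factor occurs, and the $L$-value is intrinsic to the construction of $G$, entering through the decomposition of $J_w$ quoted above; as written your cuspidal case has no actual source for this factor. (b) The sentence ``after Hida projection these values become the spectral coefficients $c_f$'' is precisely the statement needing proof; one must use Hecke equivariance of the cocycle values and then the multiplicity-one/oldform step (or otherwise control the $a_{np}$), which your orthogonality framing silently assumes.

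Two further points. Your non-vanishing argument cites \cite{dasgupta2023}, but Dasgupta--Kakde prove the Brumer--Stark integrality statement, not $\log_pu\neq 0$; non-vanishing of $\log_p$ of a $p$-unit is not automatic, since the kernel of $\log_p$ on $F_p^\times$ contains $p^{\Z}\cdot\mu(F_p)$. What one actually uses is that when $F$ has no unit of negative norm the values $\zeta_-(0,A)$ are non-zero, so the valuations $\ord_{\mathfrak P^\sigma}(u)$ of the conjugates are not all equal; hence $u$ cannot be a power of $p$ times a root of unity (an element of $H$ that is a root of unity in $H_{\mathfrak P}$ is one in $H$), which is exactly what $\log_pu=0$ would force --- alternatively one should simply cite \cite{darmon2021,darmon2021b} for this, as the paper does. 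Finally, your Eisenstein step via the constant term is viable in principle ($U_p$ preserves $a_0$ and cusp forms have $a_0=0$), but it then gives $\lambda_0=a_0(\partial f^+_\psi)/a_0(E_2^{(p)})$ with $a_0(\partial f^+_\psi)=\tfrac12\log_p(u_\psi)$ and $a_0(E_2^{(p)})=\tfrac{p-1}{24}$, so the factors of $12$ and $24$ (an issue the paper itself flags in a footnote) must be tracked explicitly; ``yields $\tfrac{1}{p-1}\log_pu$'' cannot be left as an unexplained assertion.
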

Here $u$ is a Gross--Stark unit, $E_{2}^{(p)}$ the Eisenstein series on
$M_{2}(\Gamma_{0}(p))$, $L_{\mathrm{rat}}(1,f)$ the rational part of the
special value $L(1,f)$ of the $L$-function attached to $f$, $E_{f}$
the elliptic curve associated to $f$ via the Eichler--Shimura
construction, $\log_{E_{f}}$ the formal logarithm on $E_{f}$, and $P_{f}$
a Stark--Heegner point on $E_{f}$, conjecturally defined over the
narrow Hilbert class field of $F$.

A more precise statement is found in \cref{thm:eigenspansion}.

The goal of this paper is to show that the steps defining $G$ can be
made completely explicit in a computer algebra system such as
\texttt{sage} \cite{thesagedevelopers2022} or \texttt{magma}
\cite{bosma1997}, and in particular we can compute the spectral
coefficients of $G$ to arbitrary precision.  A key tool is the
algorithms for overconvergent modular forms due to Lauder
\cite{lauder2011,lauder2014}, with necessary modifications for
$p \in \{2,3\}$ from \cite{vonk2015}. As a proof of concept, we compute
tables of
\begin{itemize}[itemsep=-4pt]
\item Gross--Stark units over $\Q(\sqrt{D})$ for fundamental
  discriminants $D < 10000$ and $p< 20$, and
\item Stark--Heegner points on elliptic curves for $D < 100$,
  $p<20$. This can be viewed as a numerical verification of the
  conjecture in \cite{darmon2020}.
\end{itemize}
For $p$ equal to $2$ or $3$, these tables are virtually complete, with
only a handful of omissions due to the large height of the polynomials. 
\begin{example}
  Let $D = 8441 = 23 \cdot 367$. Then $F\defeq \Q(\sqrt D)$ has narrow
  class number $26$, and combining \cref{alg:GS-unit} and
  \cref{alg:rec-algdep} gives the polynomial
\begin{equation}
  \begin{alignedat}[c]{2}
  3^{43}x^{26} - &3^{28} \cdot 74700593x^{25}  &&+3^{21} \cdot
  413213377697x^{24}\\
  - &3^{14} \cdot 1491793680346193x^{23}  &&+3^{11} \cdot 48103058975883121x^{22}\\
  - &3^{8} \cdot 1176950719953501830x^{21}  &&+3^{8} \cdot
  841442767734656470x^{20}\\
  - &3^{6} \cdot 5230173358710191479x^{19}  &&+3^{7} \cdot
  1983729129037937219x^{18}\\
  - &3^{5} \cdot 28800297384178354201x^{17}  &&+3^{6} \cdot
  13798304822142405250x^{16}\\
  - &3^{2} \cdot 1314012089988186633625x^{15}  &&+3^{2} \cdot
  1350085297035065778356x^{14}\\
  - &12074610496660929030725x^{13}  &&+3^{2} \cdot
  1350085297035065778356x^{12}\\
  - &3^{2} \cdot 1314012089988186633625x^{11}  &&+3^{6} \cdot
  13798304822142405250x^{10}\\
  - &3^{5} \cdot 28800297384178354201x^{9}  &&+3^{7} \cdot
  1983729129037937219x^{8}\\
  - &3^{6} \cdot 5230173358710191479x^{7}  &&+3^{8} \cdot
  841442767734656470x^{6}\\
  - &3^{8} \cdot 1176950719953501830x^{5}  &&+3^{11} \cdot
  48103058975883121x^{4}\\
  - &3^{14} \cdot 1491793680346193x^{3}  &&+3^{21} \cdot 413213377697x^{2}\\
  - &3^{28} \cdot 74700593x  &&+3^{43}.&&&
  \end{alignedat}
\end{equation}
The roots of this polynomial are $3$-units generating the narrow Hilbert class
field of $F$, a degree $52$ extension of $\Q$, and their square roots
are Gross--Stark units attached to ideal classes in $F$, as defined in \cref{sec:gs-units}.
\end{example}

\begin{example}
  Let $p=11$ and consider $E: y^2 + y = x^3 - x^2 - 10x - 20$, a
  model for $X_{0}(11)$. Using \cref{alg:stark-heegner} we find the
  following points on $E$: 
  \begin{table}[H]
    \centering
    \begin{tabular}{c|c|c}
      $D$ & $X$ & $Y$ \\ \hline
      $21$ & $x^{2} + 3 x + 4$ & $x^{2} + 3 x + 4$ \\
      $24$ & $x^{2} + 8$ & $x^{2} + 10 x + 57$ \\
      $28$ & $x^{2} + \frac{71}{16} x + \frac{23}{4}$ & $x^{2} - \frac{101}{64} x + \frac{599}{64}$ \\
      $57$ & $x + \frac{1065}{304}$ & $x^{2} + x + \frac{1130412905}{28094464}$ \\
      $76$ & $x + \frac{1065}{304}$ & $x^{2} + x + \frac{1130412905}{28094464}$ \\
    \end{tabular}
  \caption{Table of Stark--Heegner points on
      $E: y^2 + xy + y = x^3 - x^2 - x - 14$, for $D<100$}  
  \end{table}
  For each $D$, the points on $E$ whose $x$ and $y$ coordinates are
  given by roots of $X$ and $Y$ respectively, are defined over the
  narrow Hilbert class field of $\Q(\sqrt D)$. 
  
\end{example}

The paper is structured as follows: in \cref{sec:general} we first
give a precise definition of Gross--Stark units and Stark--Heegner
points, then discuss the results of \cite{darmon2021b} and explain how to
use the classical reduction theory of indefinite quadratic forms to
greatly improve the efficiency of the resulting algorithms.  Next, in
\cref{sec:gs-units} we explain how to use the Brumer--Stark conjecture
to recover a Gross--Stark units from its $p$-adic logarithm, and how to
compute a Stark--Heegner point from its formal logarithm. We also
discuss how to verify the correctness of the data computed.  Finally,
we present some data and make some observations.

The paper is supplemented by the implementations of the algorithms in
a \texttt{magma} library: \url{https://github.com/havarddj/drd}. There
is also an implementation in
\texttt{sage}, \url{https://github.com/havarddj/hilbert-eisenstein}
which is work in progress.

\textbf{Acknowledgements:} I am very grateful to Jan Vonk for
suggesting the problem and for continued guidance and great
suggestions, and to James Newton for many helpful conversations and
comments on the paper. Thanks to Alex Braat for suggesting the
statement of \cref{lemma:central-ext}, Samuel Frengley for help with
\texttt{magma}, and to Alex Horawa and George Robinson for enlightening
conversations.

\section{The modular algorithm}
\label{sec:general}
\subsection{Notation}
For the remainder of the paper, $F$ will denote a real quadratic
extension of $\Q$ of discriminant $D$, and $\O_{F}$ its ring of
integers. If $\alpha \in F$, then $\alpha'$ denotes its conjugate. 

We let $\Cl^{+}$ be the narrow Hilbert class group of $F$, so that
$\Cl^{+} \cong G \defeq \Gal(H/F)$ where $H$ is the narrow Hilbert class
field of $F$, the maximal abelian extension of $F$ unramified at all
finite places, of degree $h^{+}$. For $\sigma \in G$, the corresponding class
in $\Cl^{+}$ is denoted $A_{\sigma}$, and conversely a class $A$ determines
an automorphism $\sigma_{A} \in G$.  The narrow class group is strictly
larger than the wide class group if and only if $F$ has no units of
norm $-1$, and in light of \cref{thm:vague-theorem} we restrict our
attention to this case. Then $H$ is a CM extension of the (wide)
Hilbert class field, and multiplication by complex conjugation
corresponds to the involution $A \mapsto A[\sqrt D]$ on the class group. Let
$p$ be a rational prime inert in $F$. Then $p$ splits completely in
$H$, and we fix a prime $\mf P$ of $H$ above $p$. This determines an
isomorphism of completions $F_{p} \cong H_{\mf P}$. A function
$f \colon \Cl^{+} \to \C$ is odd if $f(A[\sqrt D]) = -f(A)$ for all
$A \in \Cl^{+}$. The field generated by the values of a character
$\psi$ is denoted by $\Q(\psi)$.

We use fraktur letters to denote ideals, and in particular $\mf P$
will always denote a non-zero prime ideal in $H$ dividing $p$. Moreover, the
symbol $\mf d$ will be reserved for the different ideal of $F$. We say
an element $\alpha \in F$ is totally positive if $\rho(\alpha) > 0$ for all embeddings
$\rho \colon F \hookrightarrow \R$, and we write $\alpha \gg 0$. If $X \subset F$ is any subset, set
$X_{+} \defeq \{ \alpha \in X : \alpha \gg 0\}$.

Given an integral ideal $\mf a$ of $F$, let
$N(\mf a) \defeq \#(\O_{F}/\mf a)$, and this extends to fractional
ideals by $N(\mf a/\mf b) \defeq N(\mf a)/N(\mf b)$, and to elements
$\alpha \in F^{\times}$ by $N(\alpha) = N((\alpha))$, where $(\alpha)$ denotes the fractional
ideal generated by $\alpha$. By convention, we also set $N(x) = x^{2}$ when
$x$ is an indeterminate. If $\mf p$ is a non-zero prime ideal and
$\alpha \in F^{\times}$, then we set
$|\alpha|_{\mf p} = N(\mf p)^{-\ord_{\mf p}\alpha}$, where
$\ord_{\mf p} \alpha$ denotes the power of $\mf p$ appearing in the prime ideal
factorisation of $(\alpha)$.  For any number field $K$, $\mu(K)$ denotes the
set of all roots of unity in $K$. We also define the $p$-units in $K$
as the group
$\O_{K}[1/p]^{\times} \defeq \{\alpha \in K : \ord_{\mf q}(\alpha) = 0 \text{ if } \mf
q \nmid p \}$. Equivalently, this is the $S$-units of $K$ where $S$
consists of the places of $K$ above $p$. This is a finitely generated
abelian group by a version of Dedekind's unit theorem,
\cite[Thm.~12.27]{harari2020}.

\subsection{Gross-Stark units and Stark--Heegner points}
\label{sec:GS-SH}

\Cite[Prop.~3.8]{gross1981} proves the existence and uniqueness of a
``formal power of a $p$-unit'' $u \in \O_{H}[1/p]^{\times}\otimes \Q$ characterised
by the properties
\begin{equation}
  \label{eq:17}
  \ord_{\mf P} \sigma(u) = L(0,A_{\sigma}) \text{ for all } \sigma \in G
  \qq{and} \bar u = 1/u,
\end{equation}
where the bar denotes complex conjugation, and $L(s,A_{\sigma})$ is the
partial $L$-function defined by the Dirichlet series
$L(s,A_{\sigma}) = \sum_{\mf a \le \O_{F}, \, [\mf a] = A_{\sigma}} N(\mf a)^{-s}$,
which admits a meromorphic continuation to $\C$ in the usual manner.
This depends only on the choice of prime $\mf P$ of $H$ above $p$.  In
\cite[Eq.~(4)]{darmon2021b}, the authors twist by elements of $G$ to
get units $u_{A} \defeq \sigma_{A}(\bar u)$ indexed by $A \in \Cl^{+}$, equal to
$u_{\tau}$ when $A = [\Z +\tau Z]$ in their
notation. It is therefore characterised by
\begin{equation}
  \label{eq:5}
  \ord_{\mf P^{\sigma}} u_{A} = -L(0,AA_{\sigma^{-1}}) \text{ for all } \sigma \in G
  \qq{and} \bar u_{A} = 1/u_{A}.
\end{equation}
This is referred to as the \emph{Gross--Stark unit attached to
  $A$}. Note that these are all $G$-conjugate: $\sigma (u_{A}) = u_{AA_{\sigma}}$.

The Brumer--Stark conjecture, proven up to powers of $2$ in
\cite{dasgupta2023}, implies that $u_{A}^{e}$, where $e = \#\mu(H)$,
gives an element of $\O_{H}[1/p]^{\times}$. More precisely, there exists an
element $\epsilon \in \O_{H}[1/p]^{\times}$ such that
$\epsilon \otimes 1 = e \cdot u$ and such $H(\sqrt[e]{\epsilon})/F$ is an abelian extension. We set
$\epsilon_{A} \defeq \bar \sigma_{A}(\bar \epsilon)$, which we refer to as the \emph{Brumer--Stark
unit attached to $A$}.  These are the units we compute in
\cref{sec:gs-units}. An immediate consequence of the second part of
\cref{eq:5} is that $\epsilon_{A}$ always lies on the unit circle under any
embedding $H \hookrightarrow \C$. For the remainder of the paper, we will assume the
full Brumer--Stark conjecture. Our computations can then be viewed as a
verification of the conjecture.

We also attach a Gross--Stark unit to a character
$\psi :G \to \C^{\times}$ by setting
\begin{equation}
  \label{eq:9}
  u_{\psi} \defeq \prod_{A \in \Cl^{+}}u_{A}^{\psi(A)} = \prod_{\sigma \in G} \sigma(\bar u)^{\psi(A_{\sigma})},
\end{equation}
which lies in $\O_{H}[1/p]^{\times}\otimes \Q(\psi)$, and satisfies
$\ord_{\mf P u_{\psi}} = - L(0,\psi)$ and
$\sigma(u_{\psi}) = \bar \psi(A_{\sigma}) u_{\psi}$ for all
$\sigma \in G$. This is compatible with the notation in
\cite{dasgupta2011}.\footnote{However, it is different from the
  formula in {\cite[Eq.~51]{darmon2021b}}, in which $u_{\psi}$ depends on
  $\tau$, and the corresponding formula for $\ord_{\mf P} u_{\psi}$ in the
  proof of Lemma 3.5 is off by a factor of $\psi(\sigma_{A})$, or
  $\psi(\tau)$ in their notation.}

Stark--Heegner points $P_{f,\psi}$ are defined in \cite{darmon2001} and
\cite{dasgupta2005}, and for brevity we give a description of their
properties instead of a strict definition. They are defined on the
modular Jacobian $J_{0}(p)$, which coincides with an isogeny class of
elliptic curves when the genus of $X_{0}(p)$ is one. More generally,
if $J_{0}(p)$ splits into a product of abelian varieties of which one
is an elliptic curve $E$, then there exists a cuspidal eigenform $f \in
S_{2}(\Gamma_{0}(p))$  such that $E$ is isogenous to $E_{f}$, and $P_{\psi,f}$
gives a point on these.

Pick an elliptic curve $E_{f}$ in the isogeny class. In this setting,
$P_{f,\psi}$ comes from an element of $F_{p}$ defined via $p$-adic
analytic methods. By \cite[Thm.~14.1]{silverman2009}, $E_{f}(F_{p})$
is isomorphic to $F_{p}^{\times}/q^{\Z}$ where $q$ is the Tate parameter
attached to $E_{f}$. We can find an explicit isomorphism
$E_{f}(F_{p}) \to F_{p}^{\times}/q^{\Z}$ by first finding an isomorphism
between $E_{f}$ and the corresponding Tate curve $E_{q}$ by computing
their Weierstra\ss\ equations and using the intrinsic
\texttt{IsIsomorphic} in \texttt{magma}, and then computing the
isomorphism $E_{q}\to F_{p}^{\times}/q^{\Z}$ using the formulae in \cite[\S
C.14]{silverman2009}. 

This gives a point $P_{\psi,f}$ in $E_{f}(F_{p})$. However, it is
conjectured in \cite{darmon2001} that it is actually defined over $H$
via the embedding $H \hookrightarrow H_{\mf P} \cong F_{p}$, and in \cref{sec:SH} we
verify this computationally.

\subsection{Diagonal restriction derivatives}
Let $\psi$ be an odd character on $\Cl^{+}$. Following
\cite{darmon2021b} we consider the Hilbert modular Eisenstein series
$E_{1,1}(\psi)$ of parallel weight $1$ whose $q$-expansion at the cusp
$\mf d$ is given by the series 
\begin{equation}
  \label{eq:6}
  E_{1,1}(\psi)_{\mf d} = \sum_{\nu \in \mf d^{-1}_{+}} \sigma_{0,\psi}(\nu
  \mf d)q^{\tr \nu},
\end{equation}
where $\sigma_{0,\psi}(\nu \mf d)$ is the divisor sum
\begin{equation}
  \label{eq:4}
  \sigma_{0,\psi}(\nu \mf d) \defeq \sum_{\mf a \mid \nu \mf d} \psi(\mf a).
\end{equation}

For $p$ a rational prime inert in $F$, we also define the
$p$-stabilisation of $E_{1,1}(\psi)$ by
$E^{(p)}_{1,1}(\psi)(z_{1},z_{2}) \defeq E_{1,1}(\psi)(z_{1},z_{2}) -
pE_{1,1}(\psi)(pz_{1},pz_{2})$. There is a certain $p$-adic family of
modular forms $\mc F^{+}$, a linear combination of two Eisenstein
families along with the \emph{anti-parallel weight deformation},
whose weight $1$ specialisation equals $E_{1,1}^{(p)}(\psi)$.  Note that
$\mc F^{+}$ is different from the parallel weight Eisenstein family used in
\cite{darmon2021}, and computing its $q$-expansion requires a fairly
complicated argument using Galois deformation theory, the details of
which are in \cite[\S 3]{darmon2021b}. Since
$E_{1,1}^{(p)}(\psi) (z,z)$ is a classical modular form of level
$1$ and weight $2$ and therefore identically $0$,
$E_{1,1}^{(p)}(\psi)$ vanishes along the diagonally embedded copy of
$\mf h $ in its domain $\mf h\times \mf h$.  Taking the derivative of
$\mc F^{+}$ in the weight space and restricting to weight $1$ then
gives an overconvergent modular form in one variable, denoted
$\partial f^{+}_{\psi}$. We refer to this as the \emph{diagonal restriction
  derivative}, and its $q$-expansion is given as follows:

\begin{prop}[{\cite[Prop.~4.6]{darmon2021b}}]\label{prop:OCdrd}
  The diagonal restriction derivative is an overconvergent modular
  form of weight $2$ and tame level $1$
  \begin{equation}
    \label{eq:25}
    \partial f^{+}_{\psi}(q) = \frac{1}{2}\log_{p}(u_{\psi}) - \sum_{n = 1}^{\infty}
    \sum_{\substack{\nu \in \mf d_{+}^{-1} \\ \tr \nu = n} }\sum_{\substack{\mf a \mid (\nu)\mf
      d \\  (\mf a ,p) =1}} \psi(\mf a) \log_{p}\qty(\frac{\nu \sqrt{D}}{N(\mf a)})q^{n},
  \end{equation}
  with rate of overconvergence $r$ for each $r < p/(p+1)$. 
\end{prop}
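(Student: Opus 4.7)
The plan is to unwind the construction step by step: compute the $q$-expansion of the two-variable family $\mc F^+$, restrict diagonally, differentiate in the weight direction, and match the result with the claimed formula. Overconvergence will then follow from general theory of $p$-adic families of modular forms.

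First, I would use the explicit description of $\mc F^+$ in \cite[\S 3]{darmon2021b}. In the anti-parallel direction $(1+k, 1-k)$ in a neighborhood of $k=0$, the Fourier expansion of $\mc F^+$ at the cusp $\mf d$ has positive Fourier coefficients of the shape
\[
\sum_{\substack{\mf a \mid \nu \mf d \\ (\mf a, p) = 1}} \psi(\mf a)\cdot \left(\frac{\nu\sqrt{D}}{N(\mf a)}\right)^{k},
\]
up to a constant term coming from $L$-value corrections provided by the two auxiliary Eisenstein families. The coprimality condition $(\mf a, p) = 1$ reflects the $p$-stabilisation, which kills divisors divisible by $p$.

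Next, setting $z_1 = z_2 = z$ in the diagonal restriction gives $q_1^\nu q_2^{\nu'} = q^{\tr \nu}$, yielding a one-variable $p$-adic family of $q$-expansions. Because $E_{1,1}^{(p)}(\psi)(z, z)$ is a classical modular form of weight $2$ and level $1$, hence identically zero, this family vanishes at $k = 0$. Its Taylor expansion in $k$ is therefore linear to leading order, and differentiating at $k = 0$ converts each factor $(\nu\sqrt{D}/N(\mf a))^k$ into $\log_p(\nu\sqrt{D}/N(\mf a))$, reproducing the positive Fourier coefficients in \eqref{eq:25}.

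Identifying the constant term is the main obstacle, and constitutes the technical heart of \cite{darmon2021b}. The constant term of $\mc F^+$ interpolates a Deligne--Ribet $p$-adic $L$-function attached to $\psi$, and its weight-$1$ derivative is precisely the quantity appearing in the Gross--Stark conjecture. Invoking the proof of that conjecture in \cite{dasgupta2011} then converts this derivative into the $p$-adic logarithm of $u_\psi$, yielding the leading coefficient $\frac{1}{2}\log_p u_\psi$, with the factor $\frac{1}{2}$ arising from the normalisation of the anti-parallel direction. Finally, overconvergence with rate $r$ for any $r < p/(p+1)$ is inherited from $\mc F^+$ being a Coleman family of finite slope: classical weight specialisations of such families are overconvergent, with radius of overconvergence controlled by the slope, and differentiation in the weight direction preserves this radius. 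The explicit bound $p/(p+1)$ is the standard estimate from Katz's theory of the canonical subgroup on the ordinary locus, which is the relevant regime here since Hida's ordinary projector will be applied to $\partial f^+_\psi$ subsequently.
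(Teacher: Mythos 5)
The paper does not actually prove \cref{prop:OCdrd}: it is imported verbatim from \cite[Prop.~4.6]{darmon2021b}, so there is no internal proof for your argument to be measured against. Your sketch does reproduce the formal skeleton of the Darmon--Pozzi--Vonk construction as summarised just before the proposition (diagonal restriction of the family vanishes at the centre point, differentiate in weight, and the factor $(\nu\sqrt{D}/N(\mf a))^{k}$ turns into $\log_{p}(\nu\sqrt D/N(\mf a))$), and the remarks about $p$-stabilisation and the vanishing of $E^{(p)}_{1,1}(\psi)(z,z)$ are fine.

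The genuine gap is in your first step, and it is precisely the technical heart of the cited result. You assert that the positive Fourier coefficients of $\mc F^{+}$ in the anti-parallel direction $(1+k,1-k)$ ``have the shape'' $\sum_{\mf a\mid\nu\mf d,\,(\mf a,p)=1}\psi(\mf a)(\nu\sqrt D/N(\mf a))^{k}$, up to a constant term interpolating a Deligne--Ribet $p$-adic $L$-function. But there is no Eisenstein family in the anti-parallel direction: the existence of the deformation $\mc F^{+}$ and the computation of its $q$-expansion --- including its constant term --- cannot be written down by naive interpolation, and is exactly what requires the Galois deformation argument of \cite[\S 3]{darmon2021b} (deformations of the representation attached to $E_{1,1}(\psi)$, with the $p$-units of $H$ entering through the relevant cohomology classes); the present paper flags this explicitly. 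In particular your identification of the constant term by invoking \cite{dasgupta2011} is unsubstantiated as stated: the anti-parallel component is not an Eisenstein family, so its constant term is not given by an $L$-value interpolation, and the claimed equality $\tfrac12\log_{p}u_{\psi}$ is pinned down in \cite{darmon2021b} \emph{before} ordinary projection by the same deformation-theoretic computation, not by quoting the Gross--Stark theorem for a Deligne--Ribet constant term (that style of argument is closer to the parallel-weight, genus-character situation of \cite{darmon2021}). The overconvergence statement is likewise waved through: ``differentiation in the weight direction preserves the radius'' and the appeal to Coleman families need justification, though this is secondary. So your proposal assumes, rather than proves, the two points that make the proposition nontrivial: the shape of the higher coefficients of the anti-parallel deformation and the value of its constant term.
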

 The symbol $\log_{p}$ denotes the $p$-adic
logarithm, defined by the power series $\log_{p}(1-x) =
\sum_{n=1}^{\infty}x^{n}/n$ on its domain of convergence in $\O_{F_{p}}$, and
extended by setting $\log_{p}(p) = \log_{p}(\zeta) = 0$ for any root of
unity $\zeta$ in $F_{p}$. To evaluate this at elements of $F$, we identify
$F$ with its image in $F_{p}$. 

Applying Hida's ordinary projection operator $e_{\ord}$ to
$\partial f^{+}_{\chi}$ gives a classical modular form of level $\Gamma_{0}(p)$ and weight
$2$. The space of such forms is spanned by the Eisenstein series
\begin{equation}
  \label{eq:73}
  E_{2}^{(p)}(z) =\frac{p-1}{24} + \sum_{n=1}^{\infty} \Big(\sum_{\substack{d | n
      \\ (d,p) = 1}} d\Big)q^{n},
\end{equation}
along with eigenforms $f$, which we normalise so that
$a_{1}(f) = 1$ in the $q$-expansion at $\infty$. 

\begin{thm}\label{thm:eigenspansion}
  Set $F = \Q(\sqrt{D})$ and let $p$ be a prime inert in $F$.  Assume
  conjecture 3.19 in \cite[\S 3]{darmon2020}, and write
\begin{equation}
  \label{eq:69}
  e_{\ord}(\partial f^{+}_{\psi}) = \lambda_{0}E_{2}^{(p)} + \sum_{f} \lambda_{f}f, \qq{where} \lambda_{0},\lambda_{f} \in F_{p}.
\end{equation}
Then $\lambda_{0} = \frac{1}{p-1}\log u_{\psi}$, and if $a_{n}(f) \in \Q$ for all $n$, then
$\lambda_{f} = L_{\mathrm{alg}}(1,f) \log_{E_{f}}(P_{\psi,f})$, where
$P_{\psi,f}$ is a Stark--Heegner point on $E_{f}$, the elliptic
curve attached to $f$ by the Eichler--Shimura
construction, and $L_{\mathrm{alg}}(1,f)$ is the algebraic part of the
value $L(1,f)$.
\end{thm}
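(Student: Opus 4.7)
The plan is to view \cref{thm:eigenspansion} as a refinement of \cref{thm:vague-theorem}, obtained by decomposing $G = e_{\ord}(\partial f_\psi^+)$ in the eigenbasis $\{E_2^{(p)}\} \cup \{f\}$ of $M_2(\Gamma_0(p))$ and treating the Eisenstein and cuspidal components separately.

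For the Eisenstein coefficient $\lambda_0$, the cleanest route is to compare constant terms in \cref{eq:69}. Hida's ordinary projector is the $p$-adic limit $\lim_n U_p^{n!}$, and $U_p$ preserves the constant term of any $q$-expansion; cuspidal eigenforms contribute nothing there, and $E_2^{(p)}$ is ordinary with $U_p$-eigenvalue $1$, hence is fixed by $e_{\ord}$. Reading off constant terms then yields $\lambda_0 \cdot a_0(E_2^{(p)}) = a_0(\partial f_\psi^+) = \tfrac{1}{2}\log_p u_\psi$ by \cref{prop:OCdrd}, from which the stated formula for $\lambda_0$ follows up to the normalisation of $E_2^{(p)}$.

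For the cuspidal coefficient $\lambda_f$ I would pair both sides of \cref{eq:69} against a normalised eigenform $f$. Since Eisenstein and cuspidal parts are Petersson-orthogonal, and distinct normalised eigenforms are orthogonal, this isolates $\lambda_f \langle f,f\rangle = \langle e_{\ord}(\partial f_\psi^+), f\rangle$, and the cuspidal case of \cref{thm:vague-theorem} evaluates the right-hand side as $L_{\mathrm{alg}}(1,f)\log_{E_f}(P_{\psi,f})$, the Petersson norm being absorbed into the algebraic normalisation of the $L$-value. The rationality hypothesis on the coefficients of $f$ is needed precisely at this point to ensure that $E_f$ is an elliptic curve rather than a higher-dimensional factor of $J_0(p)$, so that $\log_{E_f}$ and $P_{\psi,f}$ make sense.

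The hard part, and the reason for the hypothesis, is the last step for cuspidal $f$: the diagonal restriction derivative machinery of \cite{darmon2021b} identifies $\lambda_f$ only as the $p$-adic logarithm of an \emph{a priori} transcendental element of $F_p^\times$, namely the value of the rigid analytic cocycle attached to $f$ at a real quadratic irrationality associated to $\psi$. Conjecture 3.19 of \cite{darmon2020} is precisely the statement that this value is the Tate-parameter image of a Stark--Heegner point on $E_f$ defined over the narrow Hilbert class field; granting it, the identification of $\lambda_f$ with the formal logarithm of such a point is immediate, whereas without it one obtains only a $p$-adic identity with no algebraicity statement.
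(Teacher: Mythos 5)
The cuspidal part of your argument has a genuine gap: to evaluate $\langle e_{\ord}(\partial f^{+}_{\psi}), f\rangle$ you invoke \cref{thm:vague-theorem}, but within this paper that statement is not an independent input --- it is the informal version of \cref{thm:eigenspansion} itself (the introduction says explicitly that the precise statement is \cref{thm:eigenspansion}), asserted there only as a consequence of the results of \cite{darmon2021b} together with Conjecture 3.19 of \cite{darmon2020}. So this step is circular. The actual mechanism, which your last paragraph refers to only as ``the diagonal restriction derivative machinery'', is the chain the paper assembles: by \cite[Prop.~4.7]{darmon2021b} one has $2G = \log_{p}(u_{\psi}) + \sum_{n\ge 1}\log_{p}(T_{n}J_{w}[\psi])\,q^{n}$; the winding cocycle decomposes as $J_{w} = \tfrac{2}{p-1}J_{\mathrm{DR}} + 2\sum_{f}L_{\mathrm{alg}}(1,f)\,J_{f}^{-}$ modulo the universal cocycle (\cite[eq.~29]{darmon2021b}) --- this decomposition is the only place the multiplier $L_{\mathrm{alg}}(1,f)$ in the stated formula for $\lambda_{f}$ can come from, and your sketch never produces it; Hecke equivariance converts $\log_{p}(T_{n}J[\psi])$ into $a_{n}\cdot\log_{p}(J[\psi])$ with $a_{n}$ the relevant Hecke eigenvalue; Theorem B and the proof of Theorem 4.8 of \cite{darmon2021b} give $J_{\mathrm{DR}}[\psi]=u_{\psi}^{24}$, while Conjecture 3.19 identifies $J_{f}^{-}[\psi]$ with the Tate parameter of a Stark--Heegner point, as you correctly note; and one concludes because a form in $M_{2}(\Gamma_{0}(p))$ whose prime-to-$p$ coefficients all vanish is an oldform, hence zero. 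Without the decomposition of $J_{w}$ and this coefficient-by-coefficient comparison (or a genuine substitute), the identity $\lambda_{f}=L_{\mathrm{alg}}(1,f)\log_{E_{f}}(P_{\psi,f})$ is not established; and even granting a non-circular source for $\langle G,f\rangle$, the remark that the Petersson norm is ``absorbed into the algebraic normalisation of the $L$-value'' is not an argument --- the factor $\langle f,f\rangle$ does not simply disappear.

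Your treatment of $\lambda_{0}$, by contrast, is a legitimate route and genuinely different from the paper's: $U_{p}$ fixes constant terms of $q$-expansions, hence so does $e_{\ord}$, cusp forms contribute nothing to the constant term, and \cref{prop:OCdrd} supplies $a_{0}(\partial f^{+}_{\psi})=\tfrac{1}{2}\log_{p}(u_{\psi})$; this bypasses the cocycle machinery entirely for the Eisenstein coefficient. But the hedge ``up to the normalisation of $E_{2}^{(p)}$'' is not available: the normalisation is fixed in \cref{eq:73}, with $a_{0}(E_{2}^{(p)})=\tfrac{p-1}{24}$, so your computation yields $\lambda_{0}=\tfrac{12}{p-1}\log_{p}(u_{\psi})$ rather than the stated $\tfrac{1}{p-1}\log_{p}(u_{\psi})$. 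To be fair, the paper's own proof ends at \cref{eq:8} with the factor $\tfrac{24}{p-1}$ while its statement says $\tfrac{1}{p-1}$, and its footnote acknowledges constant/sign issues inherited from \cite{darmon2021b}; still, you should fix a normalisation and track the rational factor explicitly rather than leave it unresolved.
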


\begin{proof}[Proof of \cref{thm:eigenspansion}]  
  By \cite[Prop.~4.7]{darmon2021b},
  $G\defeq e_{\ord}(\partial f^{+}_{\psi})$ can be written as a generating
  series\footnote{There is a sign missing in the proof of Thm.~4.8
    which propagates back to Prop.~4.7. As written, the constant term of the
    Eisenstein series in the spectral expansion is off by a factor of
    $-1$.  We assume here that the statement of Thm.~4.8 is
    correct as written. We anticipate that this will be clarified in
    the published version of \cite{darmon2021b}.}
  \begin{equation}
    \label{eq:74}
   2G(z) = \log_{p}(u_{\psi}) + \sum_{n=1}^{\infty} \log_{p}(T_{n}J_{w}[\psi])q^{n}.
  \end{equation}
  Meanwhile, by \cite[eq.~29]{darmon2021b} the cocycle $J_{w}$
  decomposes as follows:
  \begin{equation}
    \label{eq:75}
    J_{w} = \frac{2}{p-1} J_{\mathrm{DR}} + 2\sum_{f}L_{\mathrm{alg}}(1,f)
    J_{f}^{-} \bmod{J^{\Z}_{\mathrm{univ}}},
  \end{equation}
  Plugging the expression for $J_{w}$ into the $n$-th Fourier
  coefficient for $n \ge 1$ coprime to $p$, we obtain 
  \begin{subequations}
    \begin{align}
      a_{n}(G) &= \frac{2}{p-1}\log_{p} T_{n}J_{\mathrm{DR}}[\psi] + 2\sum_{f}
                  L_{\mathrm{alg}}(1,f) \log_{p}T_{n}J_{f}^{-}[\psi] \\
                &= \frac{2}{p-1}\log_{p}( J_{\mathrm{DR}}[\psi])\cdot a_{n}(E_{2}^{(p)}) + \sum_{f}
                  L_{\mathrm{alg}}(1,f) \log_{p}(J_{f}^{-}[\psi])\cdot a_{n}(f).
    \end{align}
  \end{subequations}
  Theorem B of \cite{darmon2021b} combined with the proof of Theorem
  4.8 in the same paper implies that
  $J_{\mathrm{DR}}[\psi] = u_{\psi}^{24}$, and conjecture 3.19 in
  \cite{darmon2020} that $J_{f}^{-}[\psi]$ maps to
  $P_{\psi,f} \in E_{f}(F_{p})$ under the Tate uniformisation. Denoting the
  composite of the Tate map and $\log_{p}$ by $\log_{E_{f}}$, we get that
  \begin{equation}
    \label{eq:8}
    a_{n}(G) = \frac{24}{p-1}\log_{p} (u_{\psi})\cdot a_{n}(E_{2}^{(p)}) + \sum_{f}L_{\mathrm{alg}}(1,f)\log_{E_{f}}
    P_{\psi,f}\cdot a_{n}(f)
  \end{equation}
  As in the proof of \cite[Prop.~4.7]{darmon2021b}, there exists a
  modular form in $M_{2}(\Gamma_{0}(p))$ with prime to $p$ coefficients
  $a_{n}(G)$, which we denote by $g$. Now $g-G $ is an oldform in
  $M_{2}(\Gamma_{0}(p))$ as all its coefficients of index coprime to
  $p$ vanish, hence equals $0$, and this completes the proof.
\end{proof}

This construction can be made completely explicit in a computer
algebra system such as \texttt{magma} or \texttt{sage}, at least to
finite $p$-adic precision:

\begin{enumerate}
\item Compute the terms $\{a_{n}\}_{n =1}^{M}$ of the $q$-expansion of
  $\partial f^{+}_{\psi}$ in \cref{eq:25} up to a certain bound $M$ by enumerating
  the elements $\nu \in \mf d^{-1}_{+}$ of trace $n$ and factorising
  $\nu \mf d$. Since $\log_{p}(xy) = \log_{p}x + \log_{p} y$ for any
  $x,y \in F_{p}$, we only need to evalute this once per $n$. 
\item Compute a basis for the space of overconvergent modular forms to
  sufficiently high precision using \cite[Algorithm 1]{lauder2011}.
\item Solve for $\partial f^{+}_{\psi}$ and its constant term in this basis.
\item Compute the ordinary projection as a matrix on the basis, and
  apply to the vector defining $\partial f^{+}_{\psi}$ to get
  $e_{\ord} (\partial f^{+}_{\psi})$. This is described in detail in step (6) of
  \cite[Alg.~2.1]{lauder2014}.
\item Solve for $e_{\ord} (\partial f^{+}_{\psi})$ in an eigenbasis of
  $M_{2}(\Gamma_{0}(p))$, which can be found explicitly using built-in
  methods in \texttt{sage} and \emph{magma}.
\end{enumerate}

In practice, the first step is very slow due to the cost of evaluating
$\psi(\mf a)$ for many $\mf a$. Moreover, the coefficients of $\partial
f^{+}_{\psi}$ lie in an extension of $F_{p}$ generated by the values of
$\psi$, which is typically large if the narrow class number of $F$ is.

\subsection{Improvements using quadratic forms}
To get around these difficulties, we combine two observations: the
first is that if we split the sum into a sum over classes
$A \in \Cl^{+}$, then it suffices to compute sums corresponding to all
pairs $(\nu, \mf a)$ where $\mf a \mid \nu \mf d$ and $\mf a$ has class
$A$ in the narrow class group, which lie in $F_{p}$. The second is
that by the correspondence between ideals of $\Q(\sqrt D)$ and
indefinite quadratic forms of discriminant $D$, we can use reduction
theory to enumerate all such ideals.

\begin{prop}[{\cite[Ex.~7.21]{cox2011}}]\label{prop:QF-Id-corr}
  There is a bijection between ideals of $\Q(\sqrt D)$ and indefinite
  quadratic forms of discriminant $D$, given by
  \begin{subequations}
    \begin{align}
      \mf a  = \alpha \Z + \beta \Z
      &\mapsto \frac{N(x\alpha - y\beta)}{N(\mf a)}, \\
      Q(x,y) =  ax^{2}+bxy + cy^{2}
      &\mapsto \begin{cases}
           a\Z + a\tau \Z &\text{ when } a > 0,\\
           \sqrt D (a\Z + a\tau \Z) &\text{ when } a < 0.\\
         \end{cases}
    \end{align}
  \end{subequations}
  Here $\tau$ is the root of $Q(x,1)$ satisfying $\tau > \tau'$.

  This bijection respects the class group structure:
  two ideals are equivalent if and only if the corresponding quadratic
  forms are equivalent under the action of $\SL_{2}(\Z)$,
  \begin{equation}
    \label{eq:1}
    \mqty(r  & s \\ t & u) \cdot Q = Q(rx + sy, tx + uy).
  \end{equation}
\end{prop}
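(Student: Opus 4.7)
The plan is to establish the bijection in three stages: well-definedness of both maps, mutual inversion, and compatibility with the class group structure. First, given an ideal $\mf a = \alpha\Z + \beta\Z$, I would expand $N(x\alpha - y\beta)/N(\mf a) = (x\alpha - y\beta)(x\alpha' - y\beta')/N(\mf a)$ and read off its coefficients $a = N(\alpha)/N(\mf a)$, $b = -(\alpha\beta' + \alpha'\beta)/N(\mf a)$, $c = N(\beta)/N(\mf a)$. Integrality of $a,c$ is immediate from $\alpha,\beta \in \mf a$, and integrality of $b$ follows because $\alpha\beta' + \alpha'\beta = \tr(\alpha\bar\beta)$ and $\alpha\bar\beta$ lies in $\mf a \bar{\mf a} = N(\mf a)\O_F$. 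The discriminant identity $b^2 - 4ac = (\alpha\beta' - \alpha'\beta)^2/N(\mf a)^2 = D$ follows from the standard fact that an oriented $\Z$-basis $(\alpha,\beta)$ of a fractional ideal satisfies $\alpha\beta' - \alpha'\beta = \pm N(\mf a)\sqrt D$.

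In the reverse direction, given $Q(x,y) = ax^2 + bxy + cy^2$ of discriminant $D$, the lattice $a\Z + a\tau\Z$ with $\tau = (-b + \sqrt D)/(2a)$ is visibly an $\O_F$-module once one checks $\tau\cdot \tau \in a\Z + a\tau\Z$ using $a\tau^2 = -b\tau - c$. When $a<0$ the lattice still gives a fractional ideal, but one must multiply by $\sqrt D$ to make the bijection land in the correct narrow ideal class, since conjugating by $\sqrt D$ flips the sign of the norm of the basis orientation. I would verify the two maps are mutually inverse by direct computation: plugging the basis $(a, a\tau)$ into the first map recovers $Q$ up to $\SL_2(\Z)$-equivalence by a short calculation with $\tau + \tau' = -b/a$ and $\tau\tau' = c/a$.

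Next, I would show the class-group compatibility. Changing the $\Z$-basis of $\mf a$ by a matrix in $\GL_2(\Z)$ transforms the quadratic form by the indicated substitution, and the determinant $\pm 1$ of the change-of-basis matrix equals the sign of $\alpha\beta' - \alpha'\beta$, which selects a consistent orientation; restricting to oriented bases gives the $\SL_2(\Z)$-action. If $\mf a = \lambda \mf b$ for totally positive $\lambda$, multiplying a basis of $\mf b$ by $\lambda$ gives a basis of $\mf a$ whose associated form is identical, since $N(\lambda\cdot) / N(\lambda\mf b) = N(\cdot)/N(\mf b)$; conversely, equal forms (in the $\SL_2(\Z)$-orbit sense) pin down $\mf a$ and $\mf b$ up to such a totally positive scalar.

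The main obstacle, as usual in this kind of correspondence, is bookkeeping the narrow class group vs.\ the wide class group correctly, which is exactly what forces the extra factor of $\sqrt D$ in the case $a<0$ and the orientation condition $\tau > \tau'$ on the root. Everything else is essentially formal manipulation, and the remaining subtlety (matching equivalence relations) is a clean consequence once the orientation conventions are pinned down. Since the statement is classical and attributed to \cite[Ex.~7.21]{cox2011}, a short proof along these lines, or a pointer to the reference, should suffice.
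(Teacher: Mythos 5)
The paper does not prove this proposition at all: it is quoted directly from \cite[Ex.~7.21]{cox2011} with no argument supplied, so there is no in-paper proof to compare against. Your sketch is the standard textbook argument and is essentially correct: the expansion of $N(x\alpha-y\beta)/N(\mf a)$, the integrality of $b$ via $\alpha\beta'\in\mf a\mf a'=N(\mf a)\O_{F}$, the discriminant computation from $(\alpha\beta'-\alpha'\beta)^{2}=N(\mf a)^{2}D$, and the invariance of the form under scaling the ideal by a totally positive $\lambda$ are exactly the right ingredients, and the role of the $\sqrt D$ factor and the condition $\tau>\tau'$ as orientation bookkeeping for the narrow class group is correctly identified. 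Three small points to tighten. First, your formula $\tau=(-b+\sqrt D)/(2a)$ is only the root with $\tau>\tau'$ when $a>0$; for $a<0$ the correct choice is $(-b-\sqrt D)/(2a)$, and since this is precisely the case where the extra factor $\sqrt D$ appears, the orientation check there deserves care rather than being folded into "up to $\SL_{2}(\Z)$-equivalence." Second, the ideal-to-form map depends on the chosen ordered basis $(\alpha,\beta)$, so the "bijection" is honestly one between narrow ideal classes and $\SL_{2}(\Z)$-classes of forms (equivalently, ideals equipped with correctly oriented bases); your orientation remark gestures at this, but the claim that the determinant of the change-of-basis matrix "equals the sign of $\alpha\beta'-\alpha'\beta$" should say that it multiplies that quantity, which is what forces the restriction to positively oriented bases. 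Third, the converse direction of the class-group compatibility (forms $\SL_{2}(\Z)$-equivalent implies ideals narrowly equivalent) is asserted rather than argued; it needs the observation that an $\SL_{2}(\Z)$-change of form corresponds to changing the oriented basis, and that two ideals with proportional oriented bases differ by a totally positive scalar. None of these is a genuine gap in a sketch of a classical result, and citing \cite[Ex.~7.21]{cox2011}, as the paper does, would equally suffice.
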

We say that an indefinite quadratic form $Q(x,y) = ax^{2}+bxy+cy^{2}$
is \textbf{reduced} if $|\sqrt D - 2|a|| < b < \sqrt D$. Any given form
is equivalent to finitely many reduced forms.

\begin{prop}\label{prop:QF-ideal-bij}
  Let $F = \Q(\sqrt{D})$ be a real quadratic field and
  $A \in \Cl^{+}$ a fixed class with associated reduced
  quadratic form $Q_{0}$. Then there is a bijection between
  \begin{equation}
    \label{eq:99}
    \mbb I(n,A) \defeq \qty{(\mf a, \nu) :\hspace{3pt} \parbox[c]{3cm}{\raggedright $ \nu \in
        \mf d^{-1}_{+}$, $\tr \nu= n$ \\ $\mf a \mid (\nu)\mf
        d,\ [\mf a] = A$}}
  \end{equation}
and 
  \begin{equation}
    \label{eq:88}
    M(n,A) \defeq \qty{(Q = ax^{2}+bxy+cy^{2},\gamma) 
      :\hspace{3pt} \parbox[c]{3cm}{\raggedright $\gamma \in N_{n}, \ Q \sim Q_{0}^{\gamma}$, \\
        $ a > 0 > c$ }},
  \end{equation}
  where $N_{n}$ is a set of double coset representatives of
  \begin{equation}
    \label{eq:699}
    \Sl_{2}(\Z) \setminus \{\gamma \in \Mat_{2}(\Z) : \det \gamma = n\} / \Stab_{\Sl_{2}(\Z)}(Q_{0}).
  \end{equation}
\end{prop}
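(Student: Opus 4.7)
Plan. The proof proceeds by constructing an explicit bijection $\Phi\colon \mathbb{I}(n,A)\to M(n,A)$ using \cref{prop:QF-Id-corr} as the ideal-to-form dictionary, together with an explicit inverse $\Psi$.

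For the forward map: given $(\mathfrak{a},\nu)\in \mathbb{I}(n,A)$, the ideal $\mathfrak{a}$ corresponds under \cref{prop:QF-Id-corr} (with a chosen $\mathbb{Z}$-basis $\{\alpha,\beta\}$) to a form $Q_\mathfrak{a}$ of discriminant $D$, equivalent to $Q_0$ since $[\mathfrak{a}]=A$. The element $\nu\in \mathfrak{a}\mathfrak{d}^{-1}$, subject to $\tr(\nu)=n$, encodes a matrix $\gamma\in\Mat_2(\mathbb{Z})$ by expanding $\nu\sqrt{D}\in\mathfrak{a}$ and a natural companion element (coming from $\mathcal{O}_F$-multiplication, or equivalently from the trace pairing between $\mathfrak{a}$ and $\mathfrak{a}\mathfrak{d}^{-1}$) in the basis $\{\alpha,\beta\}$. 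A direct calculation combining the identity $Q_\mathfrak{a}(x,y)\,N(\mathfrak{a}) = N(x\alpha-y\beta)$ with the constraint $\tr(\nu)=n$ yields $\det\gamma=n$. The output form $Q$ is defined as the canonical representative of the $\SL_2(\mathbb{Z})$-orbit of $Q_\mathfrak{a}^\gamma$ satisfying $a>0>c$; by construction $Q\sim Q_0^\gamma$, because $Q_\mathfrak{a}\sim Q_0$.

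The inverse $\Psi$ reverses the construction. From $Q\sim Q_0^\gamma$, \cref{prop:QF-Id-corr} applied to $Q_0$ recovers the lattice $\mathfrak{a}_0$ together with the matrix data $\gamma$ specifying an element $\nu\sqrt{D}$; one reads off $\nu$ by inverting the expansion above. The sign condition $a>0>c$ matches the total positivity $\nu\gg 0$: the roots $\tau,\tau'=(-b\pm n\sqrt{D})/(2a)$ of $Q(x,1)$ satisfy $\tau\tau'=c/a<0$, so they lie on opposite sides of $0$, and under the reconstruction this forces both $\nu$ and $\nu'$ to be positive. Well-definedness on equivalence classes is then automatic: changing the basis of $\mathfrak{a}$ by $s\in\SL_2(\mathbb{Z})$ left-multiplies $\gamma$ by $s$ and transports $Q$ under the corresponding $\SL_2(\mathbb{Z})$-action, while changing the representative of $A$ by a totally positive unit $u\in \mathcal{O}_{F,+}^\times$ right-multiplies $\gamma$ by the image of $u$ in $\Stab_{\SL_2(\mathbb{Z})}(Q_0)$, so the double coset class of $\gamma$ in $N_n$ is preserved.

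The main obstacle is the explicit matrix computation: producing the companion element so that $\det\gamma=n$ falls out precisely from $\tr(\nu)=n$, rather than a combination involving $N(\nu)$ or $D$, requires careful bookkeeping with the trace pairing on $\mathfrak{a}\times\mathfrak{a}\mathfrak{d}^{-1}\to\mathbb{Z}$ and the sign conventions built into \cref{prop:QF-Id-corr}. A related subtlety is aligning $\Stab_{\SL_2(\mathbb{Z})}(Q_0)$ with the image of $\mathcal{O}_{F,+}^\times$ (rather than the full $\mathcal{O}_F^\times$), which is essential for the bijection to respect the narrow class group $\Cl^+$ instead of the wide one.
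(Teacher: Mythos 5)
There are genuine gaps. For reference, the paper does not prove this proposition from scratch: it is a one-line citation to Lemma 4.1 of Lauder--Vonk \cite{lauder2022}, with $\tau$ replaced by its quadratic form. So a self-contained argument would be a genuinely different (and welcome) route, but yours is not yet a proof. The central point of the lemma --- producing from $(\mf a,\nu)$ an integral matrix $\gamma$ with $\det\gamma = n$ precisely because $\tr\nu = n$ --- is never carried out: the ``companion element'' is left undefined, the expansion of $\nu\sqrt D$ in a basis of $\mf a$ is not written down (note that $\nu\sqrt D$ need not be a primitive vector of the lattice $\mf a$, so one cannot simply complete it to a basis), and the determinant computation is asserted as ``a direct calculation'' and then explicitly deferred in your final paragraph. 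Since this is exactly the content of the statement, the proof is incomplete at its core.

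There is also a structural error in how you treat the $Q$-component. You define the image form as ``the canonical representative of the $\SL_2(\Z)$-orbit of $Q_{\mf a}^{\gamma}$ satisfying $a>0>c$'', but no such canonical representative exists: an indefinite class contains an entire cycle of forms with $a>0>c$ (the nearly reduced forms), and $M(n,A)$ by definition contains \emph{all} of them for each $\gamma\in N_{n}$ --- this is what \cref{alg:NR-forms} computes via \texttt{ReducedCycle}. These extra forms are precisely what distinguishes distinct pairs $(\mf a,\nu)$, $(\mf a',\nu')$ whose matrices land in the same double coset of \cref{eq:699}: after you replace your $\gamma$ by its representative in $N_{n}$, the particular form $Q$ in the cycle must remember which element of the coset you started from. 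Collapsing to a single ``canonical'' form therefore destroys both injectivity and surjectivity, and indeed you never verify either (the inverse map is described only as ``one reads off $\nu$ by inverting the expansion above'', an expansion that was never given). Finally, the well-definedness discussion is off target: the data in $\mbb I(n,A)$ is an actual ideal $\mf a$, so the ambiguities to control are the choice of $\Z$-basis of $\mf a$ and of the equivalence $Q_{\mf a}\sim Q_{0}$ (the latter producing the $\Stab_{\SL_2(\Z)}(Q_{0})$ factor via the automorph of the fundamental unit); ``changing the representative of $A$ by a totally positive unit'' changes nothing, and the sides on which $\SL_2(\Z)$ and $\Stab_{\SL_2(\Z)}(Q_{0})$ act must be checked against the convention $Q^{\gamma}=Q\circ\gamma$, which you have not done.
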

\begin{proof}
  This is essentially \cite[Lemma 4.1]{lauder2022}, except we
  identify $\tau$ with its associated quadratic form.
\end{proof}
We call an element $Q \in M(n,A)$ a \emph{nearly reduced form} since although it might not
be reduced in the strict sense, it is an element of the
reduced cycle of $Q_{0}$, see \cite[Ch.~6]{buchmann2007}. Note that
$N_{n}$ can be found as a subset of the coset representatives of
$\Sl_{2}(\Z) \setminus \{\det \gamma = n\}$, which we can choose to be 
\begin{equation}
    \mqty(n/m & j \\ 0 & m), \qq{} m | n, \ 0 \le j \le m-1,\  (m,n/m) = 1.
\end{equation}

The sets $M(n,A)$ and $M(d, A)$ for $d \mid n$ are not independent: if
$Q \sim Q_{0}^{\gamma_{n}}$ for some $\gamma_{n} \in N_{n}$, then we can find
corresponding elements $\gamma_{d}$ and $\gamma_{n/d}$ such that
$\gamma_{n} = \gamma_{n}\gamma_{n/d}$, and so we can generate it in
$M(n,A)$ by applying suitable Hecke matrices to pairs in $M(d,A)$.

This gives the following recursive algorithm for computing $M(n,A)$:

\begin{algorithm}[H]
\caption{Compute set the $M(n,A)$ of nearly reduced forms}
\DontPrintSemicolon
\KwInput{
  \begin{itemize}[itemsep=-4pt]
  \item A fundamental discriminant $D$,
  \item A class $A$ in $\Cl^{+}$ represented by a quadratic form $Q_{0}$,
  \item A positive integer $n$. 
  \end{itemize}
}
\KwOutput{
  A set of sets $\{M(d,A)\}$ indexed by divisors $d \mid n$.
}

\If{ $n = 1$}{\Return $\{\{Q,\mbf{1}\}\}$}
$M_{n} \gets \emptyset$ \tcp*{Initialise $M_{n}$}
$p \gets$ smallest prime dividing $n$ \;
$d \gets n/p$ \;
$M_{d} \gets M(d,A)$ \; 
$ \displaystyle   H_{p}\gets \qty{\mqty(p/m & j \\ 0 & m) :  m \in \{1,p\}, \ 0
  \le j \le m-1}$ \;
  \For{$(Q_{d},\gamma_{d}) \in M_{d}$}{
    \For{$\delta\in H_{p}$}{
    $Q' \gets Q_{d}^{\delta}$\;
    \If{$Q' \not \sim_{\SL_{2}(\Z)} Q$ for all $(Q,\gamma) \in M_{n}$}{
      $Q_{1},\ldots, Q_{c} \gets \mathtt{ReducedCycle}(Q')$ \;
      $M_{n} \gets M_{n} \cup \{(Q_{1},\delta\gamma_{m})\ldots, (Q_{c},\delta\gamma_{m})\}$
    }
}}
\Return{ $\{M_{d} : d \mid n\}$\label{alg:NR-forms}}
\end{algorithm}

\begin{remark}\label{rmk:QF-invo}
  Note that computing $M(n,A)$ gives $M(n, A[\sqrt D])$ for free using
  the involution $ax^{2}+bxy + cy^{2}\mapsto -cx^{2}-bxy-ay^{2}$.
\end{remark}

It is convenient to work with so-called \emph{odd indicator functions
  on $\Cl^{+}$}, meaning functions of the form
\begin{equation}
  \label{eq:2}
  \1^{*}_{A}(B) \defeq \1_{A}(B) - \1_{A[\sqrt D]}(B) =
  \begin{cases}
    1 &\qq{if} B =A, \\
    -1 &\qq{if} B = A[\sqrt D], \\
    0 &\qq{otherwise}. 
  \end{cases}
\end{equation}
We can pass between odd characters and odd indicator functions via the
change of basis formulae
\begin{equation}
  \label{eq:3}
  \psi(A) = \frac{1}{2} \sum_{B \in \Cl^{+}}\psi(B)\1^{*}_{B}(A) \qq{and}
  \1^{*}_{A}(B) = \frac{2}{h^{+}}\sum_{\psi \text{ odd}} \psi(B)\bar \psi(A).
\end{equation}

These are simple consequences of the orthogonality relations for
characters, see \cite[\S 2.3]{serre1977a}. By linearity, we obtain the
following version of \cref{prop:OCdrd}:

\begin{cor}
  Fix an indefinite quadratic form $Q$ corresponding to a class $A \in \Cl^{+}$ . The series 
  \begin{equation}
    \label{eq:78}
    \partial f_{Q}^{+}(q) = \log_{p}(u_{A}) - \sum_{n = 1}^{\infty}\qty(
    \sum_{\substack{(Q,\gamma) \in M(n,A)  \\ Q = \left\langle a,b,c \right\rangle\\ 
        (a,p)=1 } }\log_{p}\qty(\frac{-b+n\sqrt D}{2a}) -
    \sum_{\substack{(Q,\gamma) \in M(n,A[\sqrt D])  \\ Q = \left\langle a,b,c \right\rangle\\ 
        (a,p)=1 } }\log_{p}\qty(\frac{-b+n\sqrt D}{2a}))q^{n},
  \end{equation}
  defines an $r$-overconvergent modular form of weight $2$ and tame
  level $1$ for any $r < p/(p+1)$.
\end{cor}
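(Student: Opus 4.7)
The plan is to derive the stated $q$-expansion by applying the character-to-indicator change of basis of \cref{eq:3} to \cref{prop:OCdrd}, and then reindexing the resulting sums via \cref{prop:QF-ideal-bij}. Concretely, I would define
\[
 \partial f_Q^+(q) \defeq \frac{2}{h^+}\sum_{\psi\text{ odd}} \bar\psi(A)\,\partial f^+_\psi(q),
\]
which by \cref{prop:OCdrd} is a finite $\overline{\mathbb{Q}}$-linear combination of $r$-overconvergent modular forms of weight $2$ and tame level $1$ for every $r < p/(p+1)$, hence has the required overconvergence. It remains to check that this series agrees with the one displayed in \cref{eq:78}; in particular, the apparent dependence on the characters $\psi$ must disappear.

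For the constant term, the second identity in \cref{eq:3} together with $\log_p u_\psi = \sum_B \psi(B)\log_p u_B$ gives
\[
 \text{const}(\partial f_Q^+) \;=\; \tfrac{1}{2}\sum_{B\in\Cl^+}\1^*_A(B)\log_p u_B \;=\; \tfrac{1}{2}\bigl(\log_p u_A - \log_p u_{A[\sqrt D]}\bigr),
\]
and \cref{eq:5} gives $u_{A[\sqrt D]} = \bar u_A = u_A^{-1}$, so the expression collapses to $\log_p u_A$ as required. For the $n$-th coefficient with $n\ge 1$, the same averaging turns $\psi(\mf a)$ into $\1^*_A([\mf a])$, splitting the double sum in \cref{eq:25} into the difference of two subsums indexed by $\mbb I(n,A)$ and $\mbb I(n, A[\sqrt D])$. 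I would then invoke the bijection $\mbb I(n,A) \leftrightarrow M(n,A)$ of \cref{prop:QF-ideal-bij} to rewrite each subsum over pairs $(Q,\gamma)$ with $Q = \langle a,b,c\rangle$.

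The one remaining ingredient is the explicit identity
\[
 \frac{\nu\sqrt D}{N(\mf a)} \;=\; \frac{-b + n\sqrt D}{2a}
\]
for corresponding pairs $(\nu,\mf a) \leftrightarrow (Q,\gamma)$, which is the main technical obstacle. This is essentially \cite[Lemma 4.1]{lauder2022}: one unpacks \cref{prop:QF-Id-corr}, uses the condition $\tr\nu = n$ to fix the $\sqrt D$-coefficient, and then the Hecke matrix $\gamma$ determines $\nu$ uniquely given $\mf a$. The coprimality condition $(\mf a, p) = 1$ transfers to $(a,p) = 1$ because $p$ is inert in $F$, so $p \mid \mf a$ iff $p \mid N(\mf a) = |a|$. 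Once these translations are in place, combining the two subsums (with signs $+1$ and $-1$ coming from $\1^*_A$) produces exactly \cref{eq:78}, completing the argument. The bulk of the work is thus a careful bookkeeping exercise around the correspondence between ideals and forms, with the sign conventions for $a<0$ absorbed into the factor $\sqrt D$ appearing in \cref{prop:QF-Id-corr}.
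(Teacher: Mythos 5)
Your proposal is correct and follows essentially the same route as the paper: define $\partial f_Q^+$ as the average $\frac{2}{h^+}\sum_{\psi\text{ odd}}\bar\psi(A)\partial f^+_\psi$, deduce overconvergence by linearity, collapse the constant term via \cref{eq:3} and $u_{A[\sqrt D]}=u_A^{-1}$, and reindex the higher coefficients through \cref{prop:QF-ideal-bij}. Your extra bookkeeping on the identity $\nu\sqrt D/N(\mf a)=(-b+n\sqrt D)/(2a)$ and the transfer of the coprimality condition is exactly what the paper leaves implicit in its citation of that bijection.
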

\begin{proof}
Define $\partial f_{Q}^{+}(q) \defeq \frac{2}{h^{+}} \sum_{\psi \text{ odd}} \bar \psi(A) \partial
f_{\psi}^{+}(q)$, which has the effect of replacing $\psi(\mf a)$ in
\cref{eq:25} with $\1^{*}_{A}[\mf a]$. Being a linear combination of
overconvergent modular forms, it is itself overconvergent of same
weight, level and rate of overconvergence.

Using \cref{prop:QF-ideal-bij}, we can rewrite the series in terms of
$M(n,A)$ and $M(n,A[\sqrt D])$, showing that \cref{eq:78} holds for
non-constant terms.

To compute the constant term of $\partial f_{Q}^{+}(q)$, note that formally,
$u_{\psi} = \sum_{A \in \Cl^{+}}\psi(A)\cdot u_{A}$, so
\begin{equation}
  \label{eq:10}
  \frac{2}{h^{+}} \sum_{\psi \text{ odd}} \bar \psi(A) \cdot u_{\psi}
  = \sum_{A \in\Cl^{+}} \frac{2}{h^{+}} \sum_{\psi \text{ odd}} \bar \psi(A)\psi(A)\cdot
  u_{A} = \sum_{A \in \Cl^{+}} \1^{*}_{A} \cdot u_{A} = u_{A}\cdot u_{A[\sqrt D]}^{-1}.
\end{equation}
The condition $\bar u_{A} = 1/u_{A}$ is equivalent to $u_{A[\sqrt
  D]} = u_{A}^{-1}$, so $\frac{2}{h^{+}} \sum_{\psi \text{
    odd}}\frac{1}{2}\log_{p} u_{\psi} = \log_{p}(u_{A})$. 
\end{proof}

This gives a reasonably efficient algorithm for computing
$\log_{p}u_{A}$:

\begin{algorithm}[H]
  \caption{Algorithm for computing $\log_{p}u_{A}$}
  \DontPrintSemicolon
  \KwIn{A real quadratic field $F = \Q(\sqrt D)$, a rational prime $p$
    inert in $F$, a class $A \in \Cl^{+}$ represented by a quadratic
    form $Q_{0}$, and an integer $N$.}
  \KwOut{$\log_{p} u_{A}$ as an element of $F_{p}$ to
    $p$-adic precision $N$.}
$m \gets p\cdot N$\;
Compute $\{M(n,A)\}_{n\le m}$ using \cref{alg:NR-forms}\;
Compute $\{a_{n}(\partial f^{+}_{Q})\}_{n \le m}$ using \cref{eq:78}\;
$B \gets \mathtt{KatzBasis}(M_{2}^{\dag}(\SL_{2}(\Z))) \bmod{p^{N},q^{m}})$\;
$\log_p u_{A} \gets \mathtt{FindConstTerm}(\{a_{n}\}_{n\le m}, B)$\;
\Return{$\log_p u_{A}\bmod p^{N}$}\label{alg:GS-unit}
\end{algorithm}

The step $\mathtt{KatzBasis}$ is described in step 3 of
\cite[Algorithm 1]{lauder2011}. Roughly speaking, a Katz basis form is the
 ratio of a classical modular form of weight $2 + (p-1)i$ and
$E_{p-1}^{i}$. Computing finitely many of these to sufficiently high
finite precision, these span a subspace of $M^{\dag}_{2}(\SL_{2}(\Z))$ in
which we can uniquely detect $\partial f^{+}_{Q}$. Further details and proofs
can be found in \cite[Chap.~2]{katz1973}.

The function $\mathtt{FindConstTerm}$
first solves a linear system obtained by solving for the higher order
coefficients of $\partial f^{+}_{Q}$ in terms of those in $B$, so that the
constant term of $\partial f^{+}_{Q}$ is a linear combination of the constant
terms of the Katz basis forms.  The
number of terms $m$ computed in the $q$-expansion of
$\partial f^{+}_{Q}$ ensures that it can always be found in the Katz basis
from \cite[Algorithm 1]{lauder2011}, although in practice smaller
values of $m$ are often sufficient.

With a little extra work we can compute the spectral expansion of
$e_{\ord}(\partial f^{+}_{Q})$. To compute the ordinary projection, we use a
trick due to Lauder, which does not seem to be recorded in the
literature. The idea is to compute matrix of $U_{p}$ acting on the
Katz basis $B$ from \cref{alg:GS-unit}, computed to precision
$\dim M_{k'}(\SL_{2}(\Z))$ where
$k' \defeq 2+(p-1)\lfloor N(p+1)/p\rfloor$. Since this approximate basis is
finite, the matrix $U_{p}$ has finite rank. Raising the matrix to the
power $2m$ and applying to the vector defining $\partial f^{+}_{\psi}$ then
gives the ordinary projection. We denote this step by
$\mathtt{OrdinaryProjection}$ below:

\begin{algorithm}[H]
  \DontPrintSemicolon
  \caption{Algorithm for spectral expansion of $e_{\ord}(\partial f^{+}_{Q})$}
  \KwIn{A real quadratic field $F = \Q(\sqrt D)$, a rational prime $p$
    inert in $F$, a character $\psi \colon \Cl^{+} \to \C^{\times}$ and a
    positive integer $m$} \KwOut{The coefficients $\lambda_{0}$ and
    $\lambda_f$ of $e_{\ord}(\partial f^{+}_{\psi})$ as elements of
    $F_{p}$, represented with $p$-adic precision $N$.}
  $m \gets \dim M_{2+(p-1)\lfloor N(p+1)/p\rfloor}(\SL_{2}(\Z))$\;
  Compute $B \bmod{(p^{m},q^{N})}$ and
  $\{a_{n}(\partial f^{+}_{\psi})\}_{n =0}^{N}$ as in \cref{alg:GS-unit}\;
  $G \gets \mathtt{OrdinaryProjection}(\{a_{n}(\partial
  f^{+}_{\psi})\}_{n =0}^{N}, B)$\;
  $M \gets M_{2}(\Gamma_{0}(p))\otimes F_{p}$\;
  \Return{$\mathtt{FindInSpace(G,M)}$}
\end{algorithm}
Here $\mathtt{FindInSpace}(G,M)$ solves for $G = e_{\ord}(\partial f^{+}_{Q})$ in
terms of the eigenbasis for $M_{2}(\Gamma_{0}(p))$ and returns the
corresponding coefficients, which are precisely $\lambda_{0}$ and the
$\lambda_{f}$ for eigenforms $f$. The same algorithm works for $e_{\ord}(\partial
f^{+}_{\psi})$. 

\section{From logarithms to invariants}
\label{sec:gs-units}
While the algorithms in the previous section are fairly
straightforward, recovering the $u_{A}$ from $\log_{p} u_{A}$ and
$P_{f,A}$ from $\lambda_{f}$ is quite involved. In this section how to do
so. We start with the simpler case, namely that of Gross--Stark units.

\subsection{Recovering the Gross--Stark unit from its $p$-adic
  logarithm}
The ``virtual units'' $u_{A}$ are difficult to work with because they
are formal powers of units in $H$. Instead, we use the Brumer--Stark
conjecture and look instead for the (conjectural) element
$\epsilon_{A} \in \O_{H}^{\times}[1/p]$ satisfying
$e \cdot u_{A} = \epsilon_{A} \otimes 1$, where $e \defeq \# \mu(H)$. This property
implies that $\log_{p} u_{A} = \frac{1}{e} \log_{p}\epsilon_{A}$. Note that
while $u$ is determined uniquely by \cref{eq:5} because
$\O^{\times}_{H}[1/p]\otimes \Q$ is torsion-free, $\epsilon$ is only unique up to
roots of unity in $H$. This ambiguity is natural for several reasons:
first, the Brumer--Stark units over $\Q$ constructed in
\cite{gross1981} are Gauss sums, which by definition require a choice
of a root of unity to determine the additive character. Second,
$\epsilon$ being defined only up to torsion in
$\O_{H}^{\times}[1/p]$ mirrors the fact that Stark--Heegner points are
defined up to torsion in $E(H)$.

We can find the exact value of $e$ without computing the unit group of
$\O_{H}$ directly by noting that any root of unity in $H$ will lie in
the \emph{genus field} of $F$, the largest subextension
of $H$ which is abelian over $\Q$. This has the following classical
description:

\begin{prop}[{\cite[Prop.~2.19]{lemmermeyer2000}}]
  Let $F = \Q(\sqrt{D})$, and let $D = D_{1}\cdots D_{t}$ be a
  factorisation of $D$ into prime discriminants, meaning $\pm D_{i}$ is
  a prime power with sign chosen so that if $D_{i}$ is odd, then
  $D_{i}\equiv 1 \bmod{4}$. Then the genus field of $F$ equals $\Q(\sqrt{D_{1}},\ldots, \sqrt{D_{t}})$.
\end{prop}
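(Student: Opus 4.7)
The plan is to show that $L \defeq \Q(\sqrt{D_{1}}, \ldots, \sqrt{D_{t}})$ coincides with the genus field of $F$ by first establishing $L \subseteq H$, noting that $L$ is abelian over $\Q$, and finally comparing degrees.

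To show that $L/F$ is unramified at every finite prime, it suffices to check this for each quadratic extension $F(\sqrt{D_{i}})/F$, since a compositum of extensions unramified at a prime is unramified there. For a rational prime $\ell \nmid D_{i}$, the extension $\Q(\sqrt{D_{i}})/\Q$ is already unramified at $\ell$, so $F(\sqrt{D_{i}})/F$ is unramified above $\ell$. For $\ell \mid D_{i}$ (so also $\ell \mid D$), I would consider the biquadratic field $F(\sqrt{D_{i}}) = \Q(\sqrt{D}, \sqrt{D_{i}})$, whose three quadratic subfields are $F$, $\Q(\sqrt{D_{i}})$ and $\Q(\sqrt{D/D_{i}})$. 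The prime discriminants $D_{j}$ are pairwise coprime, so $\ell \nmid D/D_{i}$, which means $\ell$ is unramified in $\Q(\sqrt{D/D_{i}})$. The inertia group $I_{\ell}$ in $\Gal(F(\sqrt{D_{i}})/\Q) \cong (\Z/2\Z)^{2}$ is therefore contained in the unique index-two subgroup fixing $\Q(\sqrt{D/D_{i}})$, and since $\ell$ ramifies in both $F$ and $\Q(\sqrt{D_{i}})$, the group $I_{\ell}$ is nontrivial, so $|I_{\ell}| = 2$. Multiplicativity of ramification indices then gives $e(\mf p, F(\sqrt{D_{i}})/F) = 2/2 = 1$ for any prime $\mf p$ of $F$ above $\ell$. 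Hence $L/F$ is unramified at all finite primes, and $L \subseteq H$.

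Since $L$ is a compositum of quadratic extensions of $\Q$, it is abelian over $\Q$, and therefore contained in the genus field of $F$. To conclude the reverse inclusion, I would compare degrees: the $D_{i}$ are pairwise coprime and none is a rational square, so they are independent in $\Q^{\times}/(\Q^{\times})^{2}$, giving $[L:\Q] = 2^{t}$ and $[L:F] = 2^{t-1}$. On the other hand, Gauss's classical genus theory identifies the $2$-rank of the narrow class group $\Cl^{+}(F)$ as $t-1$, and by class field theory the genus field corresponds to the subgroup $\Cl^{+}(F)^{2}$, so its degree over $F$ is also $2^{t-1}$. Together with $L$ being contained in the genus field, this forces $L$ to equal it.

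The main obstacle is the unramifiedness check at primes $\ell \mid D_{i}$. The key insight is that within the biquadratic field $F(\sqrt{D_{i}})$ the third quadratic subfield $\Q(\sqrt{D/D_{i}})$ is unramified at $\ell$, forcing the inertia at $\ell$ to have order $2$ rather than $4$, which is precisely what makes $F(\sqrt{D_{i}})/F$ unramified above $\ell$. The remaining steps reduce to a standard degree count and an appeal to Gauss's classical computation of the $2$-rank of the narrow class group.
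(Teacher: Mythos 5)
Your argument is correct, but note that the paper does not actually prove this statement -- it is quoted from Lemmermeyer (Prop.~2.19), so there is no internal proof to compare against; what you have written is essentially the standard genus-theory argument. The ramification analysis is sound: for $\ell \nmid D_{i}$ the relative discriminant bound for the compositum does the job, and for $\ell \mid D_{i}$ your use of the third quadratic subfield $\Q(\sqrt{D/D_{i}})$ (unramified at $\ell$ because the prime discriminants are pairwise coprime, and because $D/D_{i}$ is again a discriminant, so $\ell=2$ causes no trouble) correctly pins $|I_{\ell}|=2$ and hence $e(\mf p, F(\sqrt{D_{i}})/F)=1$. Two points in the closing degree count deserve a line more than you give them. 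First, the assertion that the genus field is the fixed field of $(\Cl^{+})^{2}$ under the Artin map is not a formal consequence of class field theory alone: you should note that the maximal subextension of $H$ abelian over $\Q$ is the fixed field of the commutator subgroup of $\Gal(H/\Q)$, and that this commutator subgroup is $\{a^{\sigma}a^{-1}\} = (\Cl^{+})^{2}$ because $\mf a\,\mf a^{\sigma} = (N\mf a)$ is generated by a totally positive rational, so $[\mf a]^{\sigma} = [\mf a]^{-1}$ in $\Cl^{+}$. Second, the input that the $2$-rank of $\Cl^{+}$ is $t-1$ is Gauss's ambiguous class number formula; citing it is legitimate, but be aware it is the substantial classical ingredient here (many treatments, including Lemmermeyer's, prove the genus field description and this rank statement by intertwined arguments, so if you wanted a self-contained proof you would instead bound $[\text{genus field}:F]\le 2^{t-1}$ directly by analyzing which quadratic characters can cut out extensions unramified over $F$). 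With those two clarifications your proof is complete.
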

Since the only quadratic extensions with other roots of unity than
$\pm1$ are $\Q(\sqrt{-1})$ and $\Q(\sqrt{-3})$, we obtain the following:
\begin{cor}\label{cor:genus-field}
  We have $\# \mu(H) > 2$ if and only if either of the following holds:
  \begin{enumerate}
  \item $D \equiv 0 \bmod{3}$, in which case $H$ contains a cube root of
    unity.
  \item $D \equiv 0 \bmod{4}$ and $D/4 \equiv 3 \bmod{4}$, in which case
    $H$ contains $\sqrt{-1}$.
  \end{enumerate}
\end{cor}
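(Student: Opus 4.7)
The statement follows from the preceding proposition together with elementary facts about cyclotomic subfields of multiquadratic extensions. The plan has three main steps.

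First, I would show that any root of unity $\zeta \in H$ different from $\pm 1$ forces the genus field to contain either $\sqrt{-1}$ or $\sqrt{-3}$. The subfield $\Q(\zeta)$ is abelian over $\Q$, hence contained in the maximal subextension of $H$ that is abelian over $\Q$, which is the genus field $L \defeq \Q(\sqrt{D_1}, \ldots, \sqrt{D_t})$ by the preceding proposition. Since $L$ has exponent $2$ over $\Q$, the degree $[\Q(\zeta):\Q]$ must be a power of $2$ with every intermediate cyclic subquotient of order $\le 2$; by considering $(\Z/n\Z)^{\times}$ for $\zeta = \zeta_{n}$, this restricts $n$ to divisors of $24$. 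In particular $\zeta^{k} \in \{\zeta_{3}, \zeta_{4}\}$ for a suitable power, so either $\sqrt{-3}$ or $i$ lies in $L$.

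Second, I would translate each of these conditions back into a condition on the prime discriminant decomposition of $D$. Because each $D_{i}$ is a prime discriminant with odd part a distinct prime, any non-trivial subset product $\prod_{i\in S} D_{i}$ has squarefree odd part, and (after absorbing the factor of $4$ allowed by $D_{i}\in\{-4,\pm 8\}$) is itself squarefree up to a controlled power of $2$. The key computational step is then: $\sqrt{-3} \in L$ forces $\prod_{i\in S}D_{i} = -3$ for some $S$, which by size and coprimality of the $D_{i}$ means $-3 \in \{D_{1},\ldots,D_{t}\}$. Since the prime discriminant at $3$ is always $-3$, this happens iff $3 \mid D$. Similarly $\sqrt{-1} \in L$ forces a subset product equal to $-4$; again the $D_{i}$ are too large in absolute value for any product of two or more to give $-4$, so we need $-4 \in \{D_{i}\}$, which amounts to the $2$-part of $D$ being $-4$.

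Third, I would identify the $2$-part condition in concrete terms. The possible $2$-parts of a fundamental discriminant are $1, -4, 8, -8$: checking each case shows that $-4$ occurs precisely when $4 \mid D$ and $D/4 \equiv 3 \pmod 4$, whereas $\pm 8$ give $D/4 \equiv 2 \pmod 4$. Combining the two cases gives the claimed equivalence, and the descriptions ``$H$ contains a cube root of unity'' and ``$H$ contains $\sqrt{-1}$'' follow from what we proved in step one. The only delicate point is the subset-product analysis in step two; everything else is bookkeeping about prime discriminants. I expect no serious obstacle, as this is essentially a parity/coprimality computation.
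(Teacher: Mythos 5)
Your argument is correct and follows essentially the same route as the paper: the paper's (one-line) proof likewise observes that any root of unity in $H$ lies in the genus field $\Q(\sqrt{D_1},\ldots,\sqrt{D_t})$, that the only quadratic fields with roots of unity beyond $\pm 1$ are $\Q(\sqrt{-1})$ and $\Q(\sqrt{-3})$, and that these occur precisely when $-4$ or $-3$ appears among the prime discriminants, i.e.\ under conditions (i) and (ii). Your write-up just makes explicit the bookkeeping (the $n \mid 24$ restriction and the subset-product analysis) that the paper leaves implicit.
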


The kernel of $\log_{p}$ is much larger than that of the archimedean
$\log$, containing powers of $p$ as well as roots of unity. Passing
from $\log_{p}\epsilon_{A}$ to $\epsilon_{A}$ requires knowing both
$\ord_{\mf P}\epsilon_{A}$ and $\epsilon_{A} \bmod{\mf P}$. We can deal with the
latter by looping through all the roots of unity in $H_{\mf P}$, of
which there are $p^{2}-1$, and test the product separately. This,
along with the computation of the Katz basis, are the main bottlenecks
in the algorithm for large values of $p$.  Certain Stark units modulo
$p$ appear in a recent conjecture of Harris--Venkatesh
\cite{harris2019}, and it would be interesting to see if an analogous
conjecture could describe the mod $\mf P$ reduction of
$u_{A}$.

To find the $\mf P$-valuation, we use a classical theorem due to
C.~Meyer which we now describe. Let $A \in \Cl^{+}$ be a narrow ideal
class, and recall that corresponding partial $L$-series is given by 
\begin{equation}
  \label{eq:20}
  L(s,A) \defeq \sum_{\mf a \leq \O_{F}, \, [\mf a] = A} \frac{1}{N(\mf
    a)^{s}}, \qq{} \Re(s) > 1.
\end{equation}
Let $\zeta_{-}(s,A) \defeq \frac{1}{2}(L(s,A)- L(s,A[(\sqrt{D})]))$. This
is non-zero if and only if $F$ has no unit of negative norm, which is a
running assumption.

Let $\epsilon$ denote the fundamental unit of $F$, necessarily satisfying
$N(\epsilon) = 1$, and fix a representative $\mf a \leq \O_{K}$ for
$A$ with $\Z$-basis $1,w$. Then $\epsilon\cdot \mf a = \mf a$, and so we can find
integers $a,b,c$ and $d$ such that
 \[ \epsilon w  = aw + b \qq{and} \epsilon = cw +d.
 \]
 This is done explicitly in \cref{alg:Meyer}. Since the action of
 $\epsilon$ is invertible and preserves the order of the basis, the matrix
 $\gamma_{A} \defeq \mqty(a & b \\ c & d)$ has determinant $1$. Passing to
 the associated quadratic form $Q = Q_{1}x^{2}+ Q_{2}xy + Q_{3}$ using
 \cref{prop:QF-Id-corr} and
 writing $\epsilon = u + t\sqrt{D}$, a straightforward computation shows that
 \begin{equation}
   \label{eq:12}
   \gamma_{A} = \mqty(t+Q_{2}u & 2Q_{3}u \\ -2Q_{1}u & t-Q_{2}u).
 \end{equation}

 Let $\Phi \colon \SL_{2}(\Z) \to \R$ denote the \emph{Dedekind symbol}
 defined by
 \begin{equation}
   \label{eq:7}
   \Phi\mqty(a & b \\ c & d) \defeq
   \begin{cases}
     b/d \qq{} &\text{if } c = 0, \\
     \frac{a+d}{c} - 12\sgn(c) \cdot s(a,c)\qq{}&\text{if } c \neq 0,
   \end{cases}
 \end{equation}
 where $s(a,c)$ is the \emph{Dedekind sum}
 \begin{equation}
   \label{eq:dedekind_sum}
s(a,c) \defeq \sum_{k=1}^{|c|}\qty(\qty(\frac{ak}{c}))
\qty(\qty(\frac{k}{c})) \qq{for} (a,c) =1, c \neq 0,
\end{equation}
with $((x)) = 0$ if $x \in \Z$ and $((x)) = x - \floor{x} - 1/2$
otherwise. 

By adding a correction term to $\Phi$, Rademacher showed that the
eponymous \emph{Rademacher symbol},
\begin{equation}
  \label{eq:520}
  \Psi(\gamma) \defeq \Phi(\gamma) - 3 \sgn(c(a+d)),
\end{equation}
depends only on the conjugacy class of $\gamma$.

\begin{thm}[Meyer]\label{thm:Meyer}
  Fix a class $A \in \Cl^{+}$, and let $\gamma_{A} \in \SL_{2}(\Z)$ be the
  associated matrix. Then
  \begin{equation}
    \label{eq:690}
    \zeta_{-}(0,A) = \frac{1}{12} \Psi(\gamma_{A}).
  \end{equation}
\end{thm}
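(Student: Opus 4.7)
The plan is to evaluate $\zeta_-(0,A)$ directly via a Shintani-style cone decomposition and to match the result to $\Psi(\gamma_A)/12$. The first step is to choose a representative $\mathfrak{a}$ of $A$ with $\Z$-basis $(1, w)$, $w > w'$, so that integral ideals in $A$ correspond bijectively to $\epsilon^{\Z}$-orbits of totally positive elements $\xi = m + nw \in \mathfrak{a}$, where $\epsilon$ is the totally positive fundamental unit of $F$. Under this identification, multiplication by $\epsilon$ on $\mathfrak{a}$ transports to the action of $\gamma_A \in \SL_2(\Z)$ from \cref{eq:12} on $\Z^2$, and the partial zeta function becomes
\[
L(s,A) = N(\mathfrak{a})^s \sum_{(m,n) \in S/\gamma_A^{\Z}} N(m+nw)^{-s},
\]
where $S \subset \Z^2$ is the subset corresponding to $\xi \gg 0$.

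Next I would antisymmetrise. The class $A[(\sqrt D)]$ corresponds, via the bijection of \cref{prop:QF-Id-corr}, to replacing $\xi$ by $\sqrt{D}\cdot \xi$, which geometrically flips one of the two archimedean signs. Forming $\zeta_-(s,A)$ therefore restricts the sum to a single wedge in $\R^{2}_{>0}$ bounded by the two eigenlines of $\gamma_A$, which pass through $(1,w)$ and $(1, w')$. Using a Shintani decomposition, I would subdivide this wedge into a finite union of simplicial cones $\R_{>0}v_1 + \R_{\ge 0} v_2$ with primitive $v_i \in \Z^2$, and partition the zeta sum accordingly.

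Each cone sum can then be evaluated at $s = 0$ directly. Inside a simplicial cone the lattice points are parametrised as $k_1 v_1 + k_2 v_2$ with $k_i \in \Z_{\ge 0}$, and the partial sum reduces to an explicit double Hurwitz zeta value; applying $\zeta(0,x) = \tfrac{1}{2}-x$ produces a finite sum of fractional parts $((k/c))$ and $((ak/c))$, where $\gamma_A = \left(\begin{smallmatrix} a & b \\ c & d \end{smallmatrix}\right)$. Aggregating over all cones, these collapse into the Dedekind sum $s(a,c)$ of \cref{eq:dedekind_sum}, while the leftover diagonal and boundary terms combine to $(a+d)/c$, producing $\Phi(\gamma_A)/12$ up to a sign correction.

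The main obstacle will be the careful tracking of these boundary contributions. Different Shintani decompositions of the wedge agree only modulo segments whose $s=0$ values do not manifestly cancel, but differ instead by a term of the form $-\tfrac{1}{4}\sgn(c(a+d))$. This is precisely the adjustment that converts $\Phi(\gamma_A)/12$ into $\Psi(\gamma_A)/12$, and it is forced on us by the fact that $\zeta_-(0,A)$ depends only on the $\SL_2(\Z)$-conjugacy class of $\gamma_A$, of which $\Psi$ but not $\Phi$ is a class invariant. Verifying that the Shintani computation genuinely produces $\Psi$ rather than some ad hoc modification of $\Phi$ is the technical heart of the argument.
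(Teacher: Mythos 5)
The paper itself offers no argument here: it simply cites Siegel's lectures, where Meyer's theorem is obtained from Hecke's integral representation of the partial zeta function along the closed geodesic attached to $A$ together with the first Kronecker limit formula, the Dedekind sum entering through the transformation law of $\log \eta$ under $\gamma_{A}$. Your plan follows instead the Shintani--Zagier route (cone decompositions and special values), which is a genuine and viable alternative; however, as written it has a gap exactly at what you yourself call the technical heart, and one intermediate step is misstated. Since $N(m+nw)^{-s} = (m+nw)^{-s}(m+nw')^{-s}$ is a product of two \emph{different} linear forms, the sum over a simplicial cone does not factor into one-variable Hurwitz zeta functions, so ``applying $\zeta(0,x)=\tfrac{1}{2}-x$'' is not available except for very special cones. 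What is needed is Shintani's evaluation at $s=0$ via the two-variable continuation, in which the products of fractional parts arise from $B_{1}\cdot B_{1}$ terms over the rational points of the fundamental parallelepiped and the $(a+d)/c$ contribution arises from the $B_{2}$ terms; none of that machinery is set up in your sketch, and without it the appearance of $s(a,c)$ as in \cref{eq:dedekind_sum} with $a,c$ the actual entries of $\gamma_{A}$ (rather than a Dedekind--Rademacher sum attached to the reduction/continued-fraction data of $w$, which is what a cone decomposition naturally produces) requires Dedekind-sum reciprocity or an equivalent collapsing argument that you do not indicate.

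The second gap is the passage from $\Phi$ to $\Psi$. Observing that $\zeta_{-}(0,A)$ depends only on $A$, hence only on the conjugacy class of $\gamma_{A}$, shows that whatever your computation yields must be a class function; it does not force the correction term to be $-3\sgn(c(a+d))$ (equivalently $-\tfrac{1}{4}\sgn(c(a+d))$ after dividing by $12$), nor does it rule out that the cone computation produces $\Phi$ plus some other conjugation-invariant discrepancy. That term has to be extracted from an explicit bookkeeping of the boundary rays and the choice of which faces of each cone are closed, which is precisely the part left unverified. So the strategy can be carried through --- it is essentially Zagier's reproof of Meyer's theorem and is independent of the Kronecker-limit-formula proof the paper cites --- but the proposal as it stands asserts rather than proves the two identities on which the result rests.
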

A proof can be found in \cite[\S 2.45]{siegel1961}.

\begin{cor}\label{cor:GS-pval}

  Let $u_{A}$ be a Gross--Stark unit attached to a narrow ideal class
  $A$. Then 
  \begin{equation}
    \label{eq:750}
    \ord_{\mf P} u_{A} = -\frac{1}{12} \Psi(\gamma_{A}).
  \end{equation}
  Similarly, for the associated Brumer--Stark unit $\epsilon_{A}$,
  \begin{equation}
    \label{eq:11}
    \ord_{\mf P} \epsilon_{A} = -\frac{e}{12} \Psi(\gamma_{A}),
  \end{equation}
  where $e = \#\mu(H)$. 
\end{cor}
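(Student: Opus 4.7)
The plan is to reduce the corollary to Meyer's theorem via the defining properties of $u_{A}$ recorded in \cref{eq:5}. Setting $\sigma$ equal to the identity in $\ord_{\mf P^{\sigma}}(u_{A}) = -L(0, AA_{\sigma^{-1}})$ immediately gives $\ord_{\mf P}(u_{A}) = -L(0,A)$, so it suffices to identify $L(0,A)$ with $\zeta_{-}(0,A) = \tfrac{1}{2}(L(0,A) - L(0, A[\sqrt{D}]))$, since then \cref{thm:Meyer} applies directly.

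For this identification, the key is to exploit the second defining condition $\bar u_{A} = 1/u_{A}$. Since $F$ has no unit of negative norm, $H$ is a CM extension of the (wide) Hilbert class field and, as recorded in the notation section, complex conjugation corresponds under the Artin map to the narrow class $[\sqrt{D}]$. Combining this with the Galois compatibility $\sigma(u_{A}) = u_{AA_{\sigma}}$ translates $\bar u_{A} = 1/u_{A}$ into the equality $u_{A[\sqrt{D}]} = u_{A}^{-1}$. Taking $\mf P$-valuations of both sides and reading off using \cref{eq:5} forces $L(0, A[\sqrt{D}]) = -L(0,A)$, so that $\zeta_{-}(0,A) = L(0,A)$ as desired. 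Plugging Meyer's formula in yields $\ord_{\mf P}(u_{A}) = -\tfrac{1}{12}\Psi(\gamma_{A})$.

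For the Brumer--Stark unit $\epsilon_{A}$, I would simply invoke the defining relation $\epsilon_{A}\otimes 1 = e\cdot u_{A}$ in $\O_{H}[1/p]^{\times}\otimes\Q$, extend $\ord_{\mf P}$ linearly to this tensor product, and scale the previous identity by $e$ to obtain $\ord_{\mf P}(\epsilon_{A}) = -\tfrac{e}{12}\Psi(\gamma_{A})$.

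I do not expect any substantive obstacle: the one mild point worth verifying carefully is that the involution $A \mapsto A[\sqrt{D}]$ used in converting $\bar u_{A} = u_{A}^{-1}$ into a statement about $u_{A[\sqrt{D}]}$ is indeed the one induced by complex conjugation under the fixed identification $\Cl^{+} \cong G$, but this is built into the notational setup and requires no new input beyond class field theory.
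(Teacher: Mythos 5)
Your proposal is correct and follows essentially the same route as the paper: both arguments use \cref{eq:5} with $\sigma = \mathrm{id}$ together with the translation of $\bar u_{A} = 1/u_{A}$ into $u_{A[\sqrt D]} = u_{A}^{-1}$ to identify $\ord_{\mf P} u_{A}$ with $-\zeta_{-}(0,A)$, and then invoke Meyer's theorem, with the Brumer--Stark case handled by scaling via $e\cdot u_{A} = \epsilon_{A}\otimes 1$. The only cosmetic difference is that the paper writes $\ord_{\mf P}u_{A}$ as $\tfrac{1}{2}(\ord_{\mf P}u_{A}-\ord_{\mf P}u_{A[\sqrt D]})$ rather than first deducing $L(0,A[\sqrt D]) = -L(0,A)$, which is the same computation.
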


\begin{proof}
  By \cref{eq:5},
  \begin{subequations}
    \begin{align}
      \ord_{\mf P} u_{A}
      &= \frac{1}{2}(\ord_{\mf P} u_{A}-\ord_{\mf P} u_{A[\sqrt{D}]})\\
      &= -\frac{1}{2}(L(0,A) - L(0,A[\sqrt D]) \\
      &= - \zeta_{-}(0,A) = - \frac{1}{12}\Psi(\gamma_{A}). 
    \end{align}
  \end{subequations}
  The second claim follows immediately from the identity $e\cdot u_{A} = \epsilon_{A}\otimes 1$.
\end{proof}

We then have the following algorithm for computing $\ord_{\mf P}
\epsilon_{A}$:

\begin{algorithm}[H]
\DontPrintSemicolon
  
\KwInput{
  An indefinite binary quadratic form $Q(x,y) = Q_{1}x^{2}+Q_{2}xy+Q_{3}y^{2}$
    of square-free discriminant $D$, representing a narrow ideal class $A$  of $F = \Q(\sqrt{D})$.
}\;
\KwOutput{$\ord_{\mf P} \epsilon_{A}$}\;

$t,u \gets$ {\tt PellSolution}$(D)$ \tcp*{Solve Pell's equation in
  $\Q(\sqrt D)$ to find fundamental unit $\epsilon = u+t\sqrt{D}$.}\;
$\gamma_{A} \defeq \mqty(a & b \\ c & d) \gets \mqty(t+Q_{2}u & 2Q_{3}u \\ -2Q_{1}u & t-Q_{2}u)$ \;
\If{$c = 0$}
{$\Phi \gets b/d $}
\Else
{$\Phi \gets \frac{a+d}{c} - 12\sgn(c)\cdot\mathtt{DedekindSum}(a,c)$}

$\Psi \gets \Phi - 3 \sgn(c(a+d))$\;
\Return{ $-e\cdot \Psi/12$}
\caption{Compute $\ord_{\mf P} \epsilon_{A}$ using Meyer's formula\label{alg:Meyer}}  
\end{algorithm}

The fundamental solution of Pell's equation grows very quickly as $D$
gets large, so computing Dedekind sums by evaluating
\cref{eq:dedekind_sum} directly can be very slow for large values of
$D$. Instead we use a formula from \cite[Ex.~3.10]{apostol1990}: By
replacing $c$ by $-c$ and $a$ by $a \bmod c$, we can assume that
$0 < a < c$.  Let $r_{0} \defeq c$, $r_{1} \defeq a$ and define
$r_{j}$ recursively to be the remainders in the Euclidean algorithm
applied to $a$ and $c$, satisfying
$r_{j+1} \equiv r_{j-1} \bmod r_{j}$ and
$1 = r_{n+1} < \ldots r_{j+1} < r_{j} \ldots < r_{0}$ for all
$1 \le j \le n-1$. Then
\begin{equation}
  \label{eq:61}
  s(a,c) = \frac{1}{12}
  \sum_{j=1}^{n+1}\qty(\frac{r^{2}_{j}+r_{j-1}^{2}+1}{r_{j}r_{j-1}}) -
  \frac{(-1)^{n}+1}{8}.
\end{equation}
This is very efficient in practice.

We make the convention of calling the minimal polynomial of $\epsilon$
the irreducible polynomial $P$ satisfying $P(\epsilon) = 0$ of minimal degree with
coefficients in $\O_{F}$ not all divisible by the same prime, such
that the leading term is a positive power of $p$.

\begin{lemma}\label{lemma:GS-props}
  Let $\epsilon$ be a Brumer--Stark unit in $\O_{H}[1/p]^{\times}$, and let $P(T) = \sum_{i=0}^{d}a_{i}T^{i} = a_{d}\prod_{\sigma \in G}(T-\sigma(\epsilon))$ be its
  minimal polynomial. Then
\begin{enumerate}
\item $\epsilon$ is a primitive element of $H$ over $F$, $H = F(\epsilon)$. 
\item $P$ is of degree $h^{+}$, and after possibly twisting $\epsilon$ by a root
  of unity in $H$, has rational coefficients.
\item $P$ is reciprocal, $a_{i} = a_{d-i}$ for all $0 \le i \le d$.
\end{enumerate}
\end{lemma}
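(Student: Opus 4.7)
The plan is to exploit the $G$-equivariance $\sigma(\epsilon_A) = \epsilon_{A A_\sigma}$ from \cref{sec:GS-SH}, which identifies the $\Gal(H/F)$-orbit of $\epsilon = \epsilon_A$ (up to roots of unity) with $\{\epsilon_B : B \in \Cl^+\}$; the three claims then follow from the combinatorics of this orbit together with standard Galois bookkeeping.

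For (i), the task reduces to showing that $B \mapsto \epsilon_B$ is injective modulo $\mu(H)$. If $\sigma_0 \in G$ satisfies $\sigma_0(\epsilon_A) \in \mu(H)\cdot \epsilon_A$, then comparing $\mf P^\tau$-valuations for every $\tau$ and invoking \cref{cor:GS-pval} gives $L(0, A_{\sigma_0} C) = L(0, C)$ for every $C \in \Cl^+$. Taking $\psi$-components of this identity yields $(1 - \psi(A_{\sigma_0})^{-1}) L(0,\psi) = 0$ for every odd $\psi$, and Meyer's theorem (\cref{thm:Meyer}) produces enough non-vanishing $L(0,\psi)$ to force $A_{\sigma_0} = 1$. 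Hence $\sigma_0 = 1$ and $[F(\epsilon):F] = h^+$.

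For (ii), the degree statement is immediate from (i), and the coefficients of $P$ lie in $F$ since the root set is $\Gal(H/F)$-stable. To descend them to $\Q$, let $\iota$ generate $\Gal(F/\Q)$ with a lift $\tilde\iota \in \Gal(H/\Q)$; using that $\iota$ acts on $\Cl^+$ by $A \mapsto A^{-1}$ and that $\tilde\iota$ commutes with complex conjugation $c \in G$ (since $H/\Q$ is Galois with $c$ central in $\Gal(H/\Q)$), one verifies that $\tilde\iota(\epsilon_B) \in \mu(H) \cdot \{\epsilon_{B'} : B' \in \Cl^+\}$, i.e., $\tilde\iota$ permutes the orbit up to roots of unity. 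A suitable twist $\epsilon \mapsto \zeta\epsilon$ with $\zeta \in \mu(H)$ then arranges $\tilde\iota(\epsilon) \in \{\sigma(\epsilon) : \sigma \in G\}$ exactly, whence $\iota(P) = P$ and $P \in \Q[T]$. For (iii), the identity $\bar\epsilon_A = \epsilon_A^{-1}$ combined with the notation-section remark that complex conjugation corresponds to multiplication by $[\sqrt D]$ on $\Cl^+$ gives $\epsilon_B^{-1} = \epsilon_{B A_{[\sqrt D]}}$, so the root set of $P$ is closed under $T \mapsto T^{-1}$, and a direct computation shows $T^{h^+}P(1/T) = (-1)^{h^+}N_{H/F}(\epsilon)\, P(T)$. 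The no-unit-of-norm-$-1$ hypothesis forces $A_{[\sqrt D]}$ to be a non-trivial involution on $\Cl^+$, so the roots pair up fixed-point-freely, making $h^+$ even and $N_{H/F}(\epsilon) = 1$, which yields $a_i = a_{h^+ - i}$.

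The main obstacle is the non-vanishing input used in (i): one must ensure that enough odd $L$-values $L(0,\psi)$ are non-zero to separate distinct narrow classes, which is precisely what Meyer's formula for $\zeta_-(0,A)$ provides. Once the primitive-element property is secured, parts (ii) and (iii) are Galois-theoretic bookkeeping relying on the CM structure of $H$.
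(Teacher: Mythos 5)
The main gap is in your part (ii). The assertion that ``a suitable twist $\epsilon\mapsto\zeta\epsilon$ then arranges $\tilde\iota(\epsilon)\in\{\sigma(\epsilon):\sigma\in G\}$ exactly'' is precisely the delicate point, and it is not justified. If $\tilde\iota(\epsilon)=\zeta\,\sigma_{1}(\epsilon)$ with $\zeta\in\mu(H)$, then replacing $\epsilon$ by $\xi\epsilon$ changes $\zeta$ into $\zeta\cdot\tilde\iota(\xi)\sigma_{1}(\xi)^{-1}$, and the homomorphism $\xi\mapsto\tilde\iota(\xi)\sigma_{1}(\xi)^{-1}$ on $\mu(H)$ is in general far from surjective; for instance it is trivial when $\mu(H)=\{\pm1\}$, so no twist can remove $\zeta$ and one must actually \emph{prove} that $\zeta$ can be taken to be $1$. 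This is exactly where the paper's proof uses the abelian-extension half of the Brumer--Stark conjecture: via Tate's Prop.~IV.1.2 and the argument of \S IV.3.7 of Tate's book, one compares $\epsilon$ with its image under the lift modulo $e$-th powers and concludes the discrepancy root of unity is trivial. Without this Kummer-theoretic input the descent of $P$ to $\Q[T]$ is not established. A second, smaller omission in the same step: your claim that $\tilde\iota$ permutes the $\epsilon_{B}$ up to roots of unity (``one verifies'') requires the symmetry $L(0,A)=L(0,A^{-1})$ — unwinding the valuations, the permuted valuation vector of $\tilde\iota(\epsilon_{B})$ matches that of some $\epsilon_{B'}$ only if the partial zeta values are inversion-invariant. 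The paper proves this via Meyer's formula and the involution $Q(x,y)\mapsto Q(-x,y)$ (it also follows directly since ideal conjugation preserves norms), but it has to be stated.

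Your part (i) is a legitimate alternative route: where the paper uses the Brumer--Stark formula for $L_{S}'(0,\chi)$ together with Gross's non-vanishing of the derivative, you use the valuation characterisation (\cref{eq:5}, \cref{cor:GS-pval}) plus non-vanishing of $L(0,\psi)$ for odd $\psi$, and then the same orthogonality count as in the paper's \cref{eq:14}. However, the non-vanishing you need is not what \cref{thm:Meyer} supplies: Meyer computes $\zeta_{-}(0,A)$, and non-vanishing of each $\zeta_{-}(0,A)$ does not rule out cancellation in $L(0,\psi)=\sum_{A}\psi(A)\zeta_{-}(0,A)$. You should instead cite the classical fact that $L(0,\psi)\neq 0$ for totally odd $\psi$ (functional equation together with $L(1,\bar\psi)\neq 0$); with that citation your argument for (i) is sound. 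Part (iii) is essentially the paper's argument, and your explicit norm computation ($h^{+}$ even, $N_{H/F}(\epsilon)=1$ from the fixed-point-free pairing, which itself uses (i)) fills in a detail the paper leaves implicit.
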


\begin{proof}
  (i) We follow the strategy of \cite[Théorème 2.3]{roblot1997}.
  Suppose $\sigma(\epsilon) = \epsilon$ for some
  $\sigma \in G$. For any character $\chi \colon G \to \C^{\times}$, let
  $ L_{S}(s,\chi)$ denote the $L$-function of $\chi$ with the Euler factor
  at $\mf P$ removed. Since $\sigma_{\mf P}=1$,
  $\chi(\sigma_{\mf P}) = 1$, and so we have $L_{S}(0,\chi) = 0$.  A consequence of the
  Brumer--Stark conjecture, see for example
  \cite[Prop.~(5.5) and Conj.~(4.2)]{tate1982}, is that $\epsilon$ satisfies
\begin{equation}
    \label{eq:stark-conj}
    L_{S}'(0,\chi) = -\frac{1}{e} \sum_{\sigma' \in G}\chi(\sigma')\log |\sigma'(\epsilon)|_{\mf P}
  \end{equation}
  for all $\chi$. It follows that
  \begin{subequations}
    \begin{align}
       L'_{S}(0,\chi) &= - \frac{1}{e} \sum_{\sigma' \in G}\chi(\sigma')\log |\sigma'(\epsilon)|_{\mf P}
      \\
              &= - \frac{1}{e} \sum_{\sigma' \in G}\chi(\sigma')\log |\sigma'\sigma(\epsilon)|_{\mf P}\\
              &= - \frac{\bar \chi(\sigma)}{e} \sum_{\sigma'' \in G}\chi(\sigma'')\log
                |\sigma''(\epsilon)|_{\mf P}\\
              & = \bar \chi(\sigma) L_{S}'(0,\chi).
    \end{align}
  \end{subequations}
  If $\chi$ is odd, then $L'_{S}(0,\chi) \ne 0$ by
  \cite[Eq.~3.1]{gross1981}, so
  $\sigma \in \bigcap_{\chi \text{ odd}}\ker \chi$. Fix an odd character
  $\psi$, and note that there is a bijection between even characters
  $\chi$ and the set of characters $\psi\cdot \psi'$ where $\psi'$ runs over all odd
  characters. Now
  \begin{equation}
    \label{eq:14}
    \sum_{\chi \in \hat G} \chi(\sigma) = \sum_{\chi \text{ odd}}\chi(\sigma) + \sum_{\chi \text{
        even}}\chi(\sigma) = (1 + \psi(\sigma))\sum_{\psi' \text{ odd}}\psi'(\sigma) = 2\#\{ \chi \text{
      odd}\} = h^{+},
  \end{equation}
  and so $\sigma = 1$.
  
  (ii) The degree of $P$ is $h^{+}$ since $\epsilon$ is primitive. Let
  $\sigma_{\mf P}\in \Gal(H/\Q)$ be the Frobenius at $\mf P$. If we lift any
  $\sigma \in G$ to $\Gal(H/\Q)$, then
  $\sigma_{\mf P}\sigma = \sigma^{-1}\sigma_{\mf P}$. It follows that
  $\ord_{\mf P}\sigma_{\mf P}(\epsilon) = \frac{1}{e} L(0,A_{\sigma^{-1}})$. Now note
  that $L(0,A_{\sigma}) = L(0,A_{\sigma^{-1}})$ for all $\sigma$; this can be seen by
  using \cref{cor:GS-pval} and noting that on quadratic forms,
  $A \mapsto A^{-1}$ is given by the map $Q(x,y) = Q(-x,y)$ on associated
  quadratic forms. It follows that
  $\sigma_{\mf P}(\epsilon) = \zeta \epsilon$ for some
  $\zeta \in \mu(H)$.

  Now $\sigma_{\mf P}(\epsilon)$ has the right valuations to be a Brumer--Stark
  unit, and it suffices to show that $\sigma_{\mf P}(\epsilon)^{1/e}$ generates an
  abelian extension of $F$. By \cite[Prop.~IV.1.2, $(d)\Rightarrow(a)$]{tate1984}, it suffices to show that there exists a system of
  elements $\{\alpha_{i}\}$ corresponding to generators
  $\{\sigma_{i}\}$ of $G$ such that
  $\sigma_{\mf P}(\epsilon)^{\sigma_{i}-n_{i}} = \alpha_{i}^{e}$, where
  $n_{i} \defeq \chi_{\mathrm{cyc}}(\sigma_{i})$. For simplicity, we use
  Tate's convention of writing the left Galois action as a
  superscript. Pick a system $\{\tilde \alpha_{i}\}_{i}$ for $\epsilon$, so that 
  \begin{equation}
    \label{eq:13}
    ( \sigma_{\mf P}(\epsilon))^{\sigma_{i}-n_{i}} = \zeta^{\sigma_{i}-n_{i}} \epsilon^{\sigma_{i} - n_{i}}
    = \epsilon^{\sigma_{i}-n_{i}} = \tilde \alpha_{i}^{e}.
  \end{equation}
  Now as in \cite[\S IV.3.7]{tate1984}, we can choose a root of unity
  such that $\epsilon$ has the same class as $\sigma_{\mf P}(\epsilon)$ modulo $e$-th
  powers, and this implies that $\zeta = 1$.
  
  (iii) $P$ being reciprocal is equivalent to
  $P(T) = T^{d}P(1/T)$, which is true if for any non-zero
  root $v$ of $P$, $1/v$ is also a root of $P$. But if $\tau$ denotes complex
  conjugation in $G$, then by \cref{eq:5}, $\tau(\sigma(\epsilon)) =
  1/\sigma(\epsilon)$.
\end{proof}

We can use the knowledge of the $\mf P$-valuations of all the
conjugates of $\epsilon$ to get bounds on the coefficients of $P$ using
the Newton polygon.

\begin{lemma}\label{lemma:GS-polygon}
   Let $v_{0}\ldots,v_{d/2-1}$ be the $\mf P$-valuations of the conjugates of
  $\epsilon$ which are positive, ordered so that
  $v_{0} \ge v_{1} \ge \ldots \ge v_{d/2-1}\ge 0$, and $v_{d/2} =0$. Then for any
  $i = 0,\ldots,d/2 $ we have
  $\ord_{p}(a_{i}) \ge \sum_{j = 0}^{d/2-i} v_{d/2-j}$. In particular,
  $\ord_{p}(a_{d}) = \ord_{p}(a_{0}) = \sum_{j=1}^{d/2}v_{j}$.
\end{lemma}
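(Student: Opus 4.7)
The plan is to apply Newton polygon theory to the monic rescaling $Q(T) \defeq P(T)/a_d = \prod_{\sigma \in G}(T - \sigma(\epsilon))$, and then to pin down $\ord_p(a_d)$ via the normalisation of $P$. The key structural input is that applying $\bar u_A = 1/u_A$ conjugate-wise shows that complex conjugation permutes the roots of $Q$ by $v \mapsto 1/v$. Hence the multiset of $\mf P$-adic valuations of the roots is exactly $\{\pm v_0, \pm v_1, \ldots, \pm v_{d/2-1}\}$, and the constant term $c_0 = (-1)^d\prod_{\sigma} \sigma(\epsilon)$ equals $\pm 1$, so $\ord_{\mf P}(c_0) = 0$.

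Sorting the negated root valuations in increasing order produces the slope sequence of the Newton polygon of $Q$, namely $-v_0 \le \ldots \le -v_{d/2-1} \le v_{d/2-1} \le \ldots \le v_0$. Combined with $\ord_{\mf P}(c_0) = \ord_{\mf P}(c_d) = 0$, this polygon descends from $(0,0)$ through the vertex $(d/2, -\sum_{j=0}^{d/2-1} v_j)$ and rises symmetrically back to $(d, 0)$. Standard Newton polygon theory then yields $\ord_{\mf P}(c_i) \ge -\sum_{j=0}^{i-1} v_j$ for $0 \le i \le d/2$, with equality at every vertex.

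To fix $\ord_p(a_d)$, I would invoke \cref{lemma:GS-props}(ii): after a twist by a root of unity, $P$ has $\Z$-coefficients with $\gcd$ prime to $p$ and $a_d = p^m$, $m > 0$. Since $p$ is inert in $F$, the valuations $\ord_p$ and $\ord_{\mf P}$ agree on $\Q$, so $a_i = a_d c_i \in \Z$ forces $m \ge -\min_i \ord_{\mf P}(c_i) = \sum_{j=0}^{d/2-1} v_j$ (using tightness at the middle vertex), while $\gcd = 1$ forces the reverse inequality. Hence $\ord_p(a_d) = \sum_{j=0}^{d/2-1} v_j$. For $0 \le i \le d/2$, this gives
\[
\ord_p(a_i) = \ord_p(a_d) + \ord_{\mf P}(c_i) \ge \sum_{j=0}^{d/2-1} v_j - \sum_{j=0}^{i-1} v_j = \sum_{j=i}^{d/2-1} v_j,
\]
which matches $\sum_{j=0}^{d/2-i} v_{d/2-j}$ after reindexing and using $v_{d/2} = 0$. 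The ``in particular'' clause follows at $i = 0$, where equality additionally holds because $(0,0)$ is itself a vertex.

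The main subtlety is the tightness of the Newton polygon at the middle vertex $(d/2, -\sum v_j)$, which is what pins down $\ord_p(a_d)$ exactly rather than as a mere inequality. This is a standard consequence of the correspondence between slope breaks and the multiset of root valuations, but it does rely on the running hypothesis that $F$ has no unit of negative norm, ensuring that the $v_j$ are well-defined and that the sign change at the middle of the polygon is genuine.
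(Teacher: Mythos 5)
Your proof is correct and takes essentially the same route as the paper: both arguments rest on the Newton polygon whose slopes are the paired root valuations $\pm v_j$ (coming from $\bar\epsilon_A=1/\epsilon_A$, i.e.\ \cref{lemma:GS-props}(iii)) together with the normalisation of $P$ to anchor the polygon's height --- you pin down $\ord_p(a_d)=\sum_{j=0}^{d/2-1}v_j$ via integrality, the gcd condition and tightness at the polygon's minimum, whereas the paper asserts $\ord_p(a_{d/2})=0$ directly, a cosmetic (if anything slightly less careful) variant of the same step. One remark: the value $\sum_{j=0}^{d/2-1}v_j$ you obtain for $\ord_p(a_0)=\ord_p(a_d)$ is the correct one, consistent with the general bound at $i=0$ and with the paper's figure (where this valuation is $3+1+1=5$); the lemma's displayed $\sum_{j=1}^{d/2}v_j$ omits $v_0$ and is an indexing slip in the statement, not a gap in your argument.
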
  
\begin{proof}
  By \cref{lemma:GS-props} (iii), the Newton polygon of $P$ is
  symmetric around the vertical line $x = d/2$, and its slopes are
  precisely equal to the $p$-valuations of the roots of $P$, the
  conjugates of $u$. Since $P$ is normalised, we know that
  $\ord_{p}a_{d/2} = 0$, so the Newton polygon of $P$ intersects the
  $x$-axis in the point $(0,d/2)$. To estimate the remaining
  coefficients, note that the Newton polygon of $P$ will always lie in
  the convex hull of the polygon determined as follows: the boundary
  is symmetric around the line $x = d/2$, and is determined by the
  points $(i,\sum_{j=0}^{d/2-i}v_{j})$ for $0 \le i\le d/2$. Since the
  $y$-coordinate of a point determining the Newton polygon of $P$ is
  the $\mf P$-valuation of the corresponding coefficient, this gives the
  required inequality.
\end{proof}

  \begin{figure}[h!]
    \centering
\begin{tikzpicture}
  \draw[->] (-1,0) -- (7,0) node[right] {$x$};
  \draw[->] (0,-1) -- (0,6) node[above] {$y$};
  \draw[-,dotted] (3,-1) -- (3,5);
  
  \filldraw[black] (0,5) circle (2pt) node[anchor= north east] {(0,5)}
  -- (1,2);
  \filldraw[black] (1,2) circle (2pt) node[anchor= east] {(1,2)} -- (2,1);
  \filldraw[black] (2,1) circle (2pt) node[anchor= east] {(2,1)} -- (3,0);
  \filldraw[black] (3,0) circle (2pt) node[anchor= north west] {(3,0)} -- (4,1);
  \filldraw[black] (4,1) circle (2pt) node[anchor= west] {(4,1)} --
  (5,2);
  \filldraw[black] (5,2) circle (2pt) node[anchor= west] {(5,2)} --
  (6,5);
  \filldraw[black] (6,5) circle (2pt) node[anchor= north west] {(6,5)};
\end{tikzpicture}
\caption{The largest possible Newton polygon determined by the
  $\mf P$-valuations of the conjugates of a Gross--Stark unit
  over $\Q(\sqrt{321})$, where the vector of valuations is given by $(-3,-1,-1,1,1,3)$.}
\end{figure}
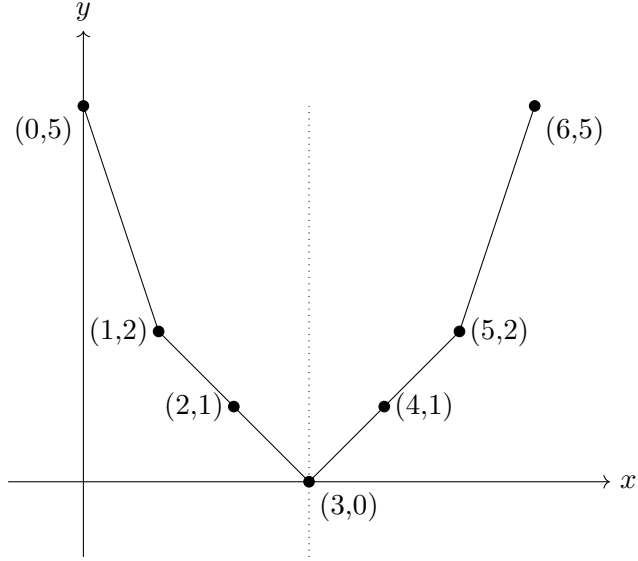

Let $\alpha = (\alpha_{1},\alpha_{2}) \in \Z/p^{m} \times \Z/p^{m}$ be an approximation of $\exp_{p}(\log_{p} \epsilon_{A} ) $, where for a
fixed generator $s$ of $\Q_{p^{2}}$ over $\Q_{p}$ we define the
natural map
\begin{equation}
  \label{eq:49}
  \Z_{p^{2}} = \Z_{p}[s] \to \Z/p^{m} \times \Z/p^{m} \qq{by} a+bs \mapsto (a \bmod{p^{m}}
  ,b \bmod{p^{m}}). 
\end{equation}
To find the minimal polynomial $P$ of $\alpha$, we apply the
\texttt{LLL} algorithm to look for linear integral relations between
powers of $\alpha$. This a common application of lattice reduction
algorithms, and a more detailed exposition can be found in \cite[\S
2.7.2]{cohen1993}. Roughly speaking, the \texttt{LLL} algorithm takes
as input a basis $b_{1},\ldots, b_{d}$ for a Euclidean lattice
$\Lambda \subset \R^{n}$, and returns a ``better'' basis
$b_{1}^{*},\ldots, b_{d}^{*}$ for $\Lambda$, in the sense that
$b_{1}^{*}$ has relatively small norm and that the vectors are
approximately orthogonal. Let $v_{0},\ldots, v_{d/2-1}$ be the
$\mf P$-valuations of the conjugates of $\epsilon_{A}$ ordered as in
\cref{lemma:GS-polygon}, computed using \cref{alg:Meyer}. We want to
find a short vector in the lattice spanned by the rows of the
following $(d/2+3) \times (d/2+3)$-matrix:

\begin{equation}
  \label{eq:42}
	 \begin{pmatrix} 1 & 0 &  \ldots & 0 & p^{v_{0}}(1+\alpha^{d})_{1} & p^{v_{0}}(1+\alpha^{d})_{2} \\
	     0& 1 & \ldots & 0 & p^{v_{1}}(\alpha^1 + \alpha^{d-1})_1&
                            p^{v_{1}}(\alpha^1+\alpha^{d-1})_2 \\
	     0& 0 & \ldots & 0 & p^{v_{2}}(\alpha^2 + \alpha^{d-2})_1 & p^{v_{2}}
                            (\alpha^2+\alpha^{d-2})_2 \\
\vdots& \vdots & \ddots& \vdots & \vdots & \vdots 	    \\
	    0 & 0 &  \ldots & 1 & (\alpha^{d/2})_1 &     (\alpha^{d/2})_2 \\
	    0 & 0 & \ldots & 0 & p^{m}  &              0 \\
	    0 & 0 & \ldots   & 0 &        0&          p^{m} \end{pmatrix}
\end{equation}

A vector
\begin{align}
  \label{eq:51}
  w = &\Big(n_{0},\ldots, n_{d/2}, n_{d/2}\alpha^{d/2}_{1}  + \sum_{i=0}^{d/2-1}p^{v_{i}}n_{i}(\alpha^{i}+\alpha^{d-i} +
p^{m})_{1},\\
      &n_{d/2}\alpha^{d/2}_{2}+\sum_{i=0}^{d/2-1}p^{v_{i}}n_{i}(\alpha^{i}+\alpha^{d-i}+
     p^{m})_{2}\Big), \nonumber
\end{align}
in the lattice is small only if
$n_{d/2}\alpha^{d/2}+\sum_{i=0}^{d/2-1}p^{v_{i}}n_{i}(\alpha^{i}+\alpha^{d-i}) \equiv 0
\bmod{p^{m}}$. Then the polynomial
$\sum_{i=0}^{d/2}p^{v_{i}}n_{i}x^{i} +
\sum_{i=d/2+1}^{d}p^{v_{d/2-i}}n_{d-i}x^{i}$ is a good candidate for the
minimal polynomial of $P$ over $\Q$. This suggests the following
algorithm:

\begin{algorithm}[H]
\DontPrintSemicolon
\KwInput{
  \begin{itemize}[itemsep=-4pt]
  \item $\alpha \in \Q_{p^{2}}$ an approximation to
    $\exp_{p}(\log_{p}\epsilon_{A})$,
  \item $v_{0},\ldots, v_{d/2-1}$ as in \cref{lemma:GS-polygon}.
\end{itemize}}
  \KwOutput{The minimal polynomial $P \in \Z[x]$ of $\epsilon_{A}$.}\;
  $\zeta \gets $ primitive $(p^{2}-1)$-st root of unity in $\Q_{p^{2}}$ \;
  \For{$k=0$ to $p^{2}-1$}    
  {
    $\alpha' \gets \zeta^{k}\alpha$ \;
    $M \gets$ matrix described in \cref{eq:42} with $\alpha'$ in place of $\alpha$ \;
    $v = (n_{i}) \gets $ first vector returned by $\mathtt{LLL}(M)$ \;
    $P \gets \sum_{i=0}^{d/2}n_{i}x^{i} + \sum_{i=d/2+1}^{d}n_{d-i}x^{i}$ \;
    \If{$n_{0} = p^{r}$ for some $r \in \N$}
    {    \If(\tcp*[f]{Described below}){$\mathtt{IsGSUnitCharPoly}(P)$}
    {\Return $P$.}}
}
 \Return 0 \;
\caption{Find the minimal polynomial of $\epsilon_{A}$ from $p$-adic
  approximation of $\log_{p} \epsilon_{A}$\label{alg:rec-algdep}}
\end{algorithm}
In practice, it is convenient to pick $A \in \Cl^{+}$ so that
$\ord_{\mf P} \epsilon_{A}$ is as close to $0$ as possible. 

The function \texttt{IsGSUnitCharPoly} performs a series of test in
order, and returns \texttt{False} if any test fails:
\begin{enumerate}
\item if $P$ is irreducible over $F$, hence generates an extension of
  $F$ of degree $h^{+}$,
\item if the absolute discriminant of $H' \defeq F[x]/(P(x))$ is a power of $D$,
  which is equivalent to $H'/F$ being unramified at all finite places,
\item if $H'/F$ is abelian. \\
  At this point we know that $H' \cong H$,
  but to ensure that $P$ is the minimal polynomial of a Gross--Stark
  unit and not just any generator of $F$, we perform a further test:
\item test if the extension generated by $P(x^{e})$ is a central extension. 
\end{enumerate}
If all of these tests are passed, then it is quite likely, although
not absolutely certain, that the polynomial $P$ has a Brumer--Stark
unit as a root. To be absolutely certain, one should test if
$P(x^{e})$ generates an abelian extension of $F$, and check
numerically that the roots satisfy \cref{eq:stark-conj}, using for
example Dokchitser's $L$-functions calculator. However, this is
computationally demanding when both $e$ and $[H:F]$ are
large.
\begin{remark}
  The requirement that the extension should be central was part of
  Stark's original conjecture, see \cite[Conj.~1]{stark1980}, and on
  \cite[p.~40]{popescu2011} Stark notes that this was sufficient
  for the factorisation of regulators which motivated it. The
  condition that the extension should be abelian likely arose from
  Tate's work leading to the formulation of the Brumer--Stark
  conjecture, and is now known to be true. It would be interesting to
  know whether ``central implies abelian'' in this situation, that is:
  if $\alpha$ is a $p$-unit which generates $H$ with $\mf P^{\sigma}$-valuations
  specified by \cref{eq:5} and $\sqrt[e]{\alpha}$ generates a central
  extension of $F$, is the extension actually abelian?
\end{remark}

To describe the test in (iv), it is convenient to introduce some
notation: Let $K \defeq H(\sqrt[e]{\epsilon_{A}})$ and
$G_{e} \defeq \Gal(K/H)$. By Kummer theory, $G_{e} \cong \Z/e\Z$. In this case
$\Gamma \defeq \Gal(K/F)$ is a group extension of $G_{e}$ and $G$,
\begin{equation}
  \label{eq:66}
  1 \to G_{e} \to \Gamma \to G \to 1.
\end{equation}
The following lemma gives a simple criterion for deciding whether
$\Gamma$ is a central extension, that is, if $G_{e}$ lies in the centre of
$\Gamma$, without computing $\Gamma$ directly:
\begin{lemma}\label{lemma:central-ext}
  Let $F$ be a number field, $H/F$ a Galois extension containing all $e$-th
  roots of unity, and $\alpha \in H^{\times}$. Define $\chi_{\cyc}\colon G \defeq
  \Gal(H/F)\to (\Z/e\Z)^{\times}$ by $\zeta^{\chi_{\cyc}(\sigma)} = \sigma(\zeta)$ for any $\zeta \in \mu_{e}(H)$. Then
  $K \defeq H(\sqrt[e]{\alpha})/F$ is a central extension if and only if for all $\sigma
  \in G$ there exists some $\beta \in H^{\times}$ such that
  $\sigma(\alpha) = \alpha^{\chi_{\cyc}(\sigma)}\beta^{e}$. 
\end{lemma}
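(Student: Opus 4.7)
The plan is to analyse the commutator of a generator of $G_{e}$ with any lift of $\sigma \in G$ via the Kummer-theoretic identification of $G_{e}$. Fix a generator $\tau$ of $G_{e}$ with $\tau(\sqrt[e]{\alpha}) = \zeta \sqrt[e]{\alpha}$, where $\zeta \in \mu_{e}(H)$ is a primitive $e$-th root of unity satisfying $\sigma(\zeta) = \zeta^{\chi_{\cyc}(\sigma)}$ for each $\sigma \in G$. Throughout I treat $\chi_{\cyc}(\sigma)$ as a chosen integer representative; different choices modify $\beta$ by a power of $\alpha \in H^{\times}$ and so leave the statement invariant.

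For the forward direction, assume $\sigma(\alpha) = \alpha^{\chi_{\cyc}(\sigma)}\beta_{\sigma}^{e}$ for each $\sigma \in G$. Since $(\sqrt[e]{\alpha}^{\chi_{\cyc}(\sigma)}\beta_{\sigma})^{e} = \sigma(\alpha)$, the formula $\tilde\sigma(\sqrt[e]{\alpha}) = \sqrt[e]{\alpha}^{\chi_{\cyc}(\sigma)}\beta_{\sigma}$ extends $\sigma$ to an automorphism $\tilde\sigma$ of $K$, so in particular $K/F$ is Galois. A short calculation exploiting $\tilde\sigma(\zeta) = \zeta^{\chi_{\cyc}(\sigma)}$ then gives
\[
\tilde\sigma\tau(\sqrt[e]{\alpha}) = \zeta^{\chi_{\cyc}(\sigma)}\sqrt[e]{\alpha}^{\chi_{\cyc}(\sigma)}\beta_{\sigma} = (\zeta\sqrt[e]{\alpha})^{\chi_{\cyc}(\sigma)}\tau(\beta_{\sigma}) = \tau\tilde\sigma(\sqrt[e]{\alpha}),
\]
so $\tilde\sigma$ commutes with $\tau$. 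Any other lift of $\sigma$ differs from $\tilde\sigma$ by an element of the cyclic group $G_{e}$ and therefore also commutes with $G_{e}$, so $G_{e}$ is central in $\Gamma$.

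For the converse, assume $K/F$ is Galois and $G_{e}$ is central in $\Gamma$. Fix $\sigma \in G$ and a lift $\tilde\sigma \in \Gamma$, and set $\gamma = \tilde\sigma(\sqrt[e]{\alpha})$, so that $\gamma^{e} = \sigma(\alpha) \in H^{\times}$. Applying the centrality relation $\tau\tilde\sigma = \tilde\sigma\tau$ to $\sqrt[e]{\alpha}$ yields $\tau(\gamma) = \tilde\sigma(\zeta)\gamma = \zeta^{\chi_{\cyc}(\sigma)}\gamma$. Expand $\gamma = \sum_{i=0}^{e-1} h_{i}\sqrt[e]{\alpha}^{i}$ in the $H$-basis $\{1, \sqrt[e]{\alpha}, \ldots, \sqrt[e]{\alpha}^{e-1}\}$ of $K$. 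Since $\tau$ acts $H$-linearly with $\tau(\sqrt[e]{\alpha}^{i}) = \zeta^{i}\sqrt[e]{\alpha}^{i}$, comparing coefficients forces $h_{i} = 0$ whenever $i \not\equiv \chi_{\cyc}(\sigma) \pmod{e}$. Hence $\gamma = \beta \sqrt[e]{\alpha}^{\chi_{\cyc}(\sigma)}$ for some $\beta \in H^{\times}$, and taking $e$-th powers yields $\sigma(\alpha) = \alpha^{\chi_{\cyc}(\sigma)}\beta^{e}$.

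The main delicate point is the converse: turning the abstract commutator identity into an explicit algebraic relation on $\alpha$ requires the $\tau$-eigenspace decomposition of $K$, which in turn uses $\mu_{e} \subset H$ so that the eigenvalues of $\tau$ all lie in $H$ and the expansion $\gamma = \sum h_{i}\sqrt[e]{\alpha}^{i}$ diagonalises $\tau$.
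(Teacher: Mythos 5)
Your argument is correct in the setting the paper actually uses, and it takes a more hands-on route than the paper. The paper's proof works entirely through the Kummer pairing: centrality is rephrased as triviality of the conjugation action of $G$ on $G_{e}$, the pairing $G_{e} \cong \Hom(\Delta,\mu_{e}(K))$ is invoked with its $G$-equivariance, and unwinding the twisted action $(\sigma\cdot\phi)(\alpha)=\phi(\sigma^{-1}(\alpha))^{\chi_{\cyc}(\sigma)}$ yields the condition that $\alpha^{\chi_{\cyc}(\sigma)}/\sigma(\alpha)$ is an $e$-th power. You instead compute the commutator of an explicit lift $\tilde\sigma$ with a generator $\tau$ of $G_{e}$ directly: in the forward direction you build the lift from $\beta_{\sigma}$ (which also gives normality of $K/F$, a point the paper leaves implicit), and in the converse you diagonalise $\tau$ on the basis $1,\sqrt[e]{\alpha},\ldots,\sqrt[e]{\alpha}^{e-1}$ and read off that $\tilde\sigma(\sqrt[e]{\alpha})$ lies in the $\zeta^{\chi_{\cyc}(\sigma)}$-eigenspace $H\cdot\sqrt[e]{\alpha}^{\chi_{\cyc}(\sigma)}$. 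This is essentially the same mechanism (the Galois action twists the Kummer character by $\chi_{\cyc}$) made explicit, and it avoids any appeal to the pairing formalism; it is arguably cleaner than the paper's version, which identifies $G_{e}$ with $\Hom(\Delta,\mu_{e})$ for $\Delta$ representing all of $H^{\times}/(H^{\times})^{e}$ and ends with a loosely phrased transitivity remark.

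One caveat: several of your steps silently assume $[K:H]=e$, i.e.\ that $x^{e}-\alpha$ is irreducible over $H$ — namely that $\zeta$ with $\tau(\sqrt[e]{\alpha})=\zeta\sqrt[e]{\alpha}$ is a \emph{primitive} $e$-th root of unity, that $1,\ldots,\sqrt[e]{\alpha}^{e-1}$ is an $H$-basis of $K$, and that the assignment $\sqrt[e]{\alpha}\mapsto\sqrt[e]{\alpha}^{\chi_{\cyc}(\sigma)}\beta_{\sigma}$ automatically defines an extension of $\sigma$ (for reducible $x^{e}-\alpha$ one must check it kills the actual minimal polynomial, or simply work with an arbitrary lift, writing $\tilde\sigma(\sqrt[e]{\alpha})=\omega\sqrt[e]{\alpha}^{\chi_{\cyc}(\sigma)}\beta_{\sigma}$ with $\omega\in\mu_{e}(H)$, which commutes with $\tau$ just as well). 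The lemma as stated allows arbitrary $\alpha\in H^{\times}$, so if $d\defeq[K:H]<e$ your converse only gives $\sigma(\alpha)=\alpha^{j}\beta^{e}$ with $j\equiv\chi_{\cyc}(\sigma)\bmod d$; this is repaired by noting $\alpha^{d}\in(H^{\times})^{e}$, so the exponent can be adjusted to $\chi_{\cyc}(\sigma)$ at the cost of changing $\beta$. Since the paper fixes $G_{e}\cong\Z/e\Z$ immediately before the lemma (and its own proof is no more careful on this point), this is a gap in generality rather than in substance, but it is worth a sentence.
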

\begin{proof}
  There is a natural action of $G$ on $G_{e} \defeq \Gal(K/H)$ by
  conjugation, $\sigma \cdot g \defeq \sigma g\sigma^{-1}$, which is well-defined
  precisely because $G_{e}$ is abelian. The extension $K/F$ is central
  if and only if the action is trivial. Let $\Delta$ be a set of
  representatives of $H^{\times}/(H^{\times})^{e}$, and note that this admits a
  natural action of $G$. The Kummer pairing (\cite[\S I.6]{gras2003})
  gives a $G$-equivariant isomorphism
  $G_{e} \cong \Hom(\Delta, \mu_{e}(K))$. The action of $G_{e}$ on the right-hand
  side is given by
  $(\sigma\cdot \phi)(\alpha) = \phi(\sigma^{-1}(\alpha))^{\chi_{\cyc}(\sigma)}$ where
  $\chi_{\cyc}(\sigma)$ is defined by
  $\sigma \cdot \zeta_{e} = \zeta_{e}^{\chi_{\cyc}(\sigma)}$. The action of
  $G$ on $G_{e}$ is trivial if and only if the action on
  $\Hom(\Delta, \mu_{e})$ is. Each element of this group is given by
  $\psi_{g} \colon \delta \mapsto \langle \delta,g \rangle \defeq \frac{g\sqrt[e]{\delta}}{\sqrt[e]{\delta}}$
  for some $g \in G_{e}$, and so $\Gamma$ is central if and only if
  $(\sigma \cdot \psi_{g})(\delta) = \psi_{g}(\delta)$ for all
  $\delta \in \Delta$, $g \in G_{e}$ and $\sigma \in G$. Equivalently,
  \begin{equation}
    \label{eq:15}
    \qty(\frac{g\sqrt[e]{\sigma^{-1}(\delta)}}{\sqrt[e]{\sigma^{-1}(\delta)}})^{\chi_{\cyc}(\sigma)}
    =\frac{g\sqrt[e]{\delta}}{\sqrt[e]{\delta}} \qq{hence}
    g\qty(\sqrt[e]{\frac{\alpha^{\chi_{\cyc}(\sigma)}}{\sigma(\alpha)}}) = \sqrt[e]{\frac{\alpha^{\chi_{\cyc}(\sigma)}}{\sigma(\alpha)}},
  \end{equation}
  where $\alpha \defeq \sigma^{-1}(\delta)$. This being true for all $g$ is
  equivalent to $\frac{\alpha^{\chi_{\cyc}(\sigma)}}{\sigma(\alpha)}$ being an $e$-th power for all
  $\sigma$. Finally, note that $G$ acts transitively on $\Delta$, so it
  suffices to check the criterion for a single $\alpha$. 
\end{proof}

This test can be implemented quite easily, and is mainly bottlenecked
by the computation of $\Gal(H/F)$, at least when $[H:F]$ is reasonably large.

% If $\Gamma$ is central, then the definition of the semi-direct product
% implies that if the extension defined by \cref{eq:66} splits, then it
% splits as a direct product, and $G$ is abelian.  This happens, by the
% Schur-Zassenhaus theorem, \cite[Cor.~7.9]{serre2016a}, if we
% additionally assume $(\#G_{1},\#G_{2}) =1$. In our situation, both
% groups have even order, so the theorem does not apply.

% {\color{red} Unnecessary?}
\begin{remark}
  A test for whether an extension is abelian is found in
  \cite[Algorithm 4.4.6]{cohen2012}. In short, the Takagi existence
  theorem gives a bijection between abelian extensions $K/F$ and
  certain \emph{Takagi subgroups} of a ray class group
  $\Cl_{\mf m} F$, where $\mf m$ is a sufficiently large
  modulus. However, this is very slow when $e$ and $h$ are large,
  because it requires computing the ray class group of $F$ of modulus
  equal to the relative discriminant of $H(\sqrt[e]{\alpha})/F$, which is
  relatively large. 
\end{remark}

\subsection{Detecting Stark--Heegner points}
\label{sec:SH}
Our method of finding Stark--Heegner points is much more
primitive, because we don't have an equivalent of the Brumer--Stark
conjecture. 

Let $E/\Q$ be an elliptic curve with split multiplicative reduction at
$p$. Recall from \cref{thm:eigenspansion} that if $E$ has associated
eigenform $f \in M_{2}(\Gamma_{0}(p))$, then the corresponding spectral
coefficient $\lambda_{f} = -L_{\mathrm{alg}}(1,f)\log_{E}(P_{\psi,f})$ involves
a point on $P_{\psi,f}$ conjecturally defined over $H$. To find this, we
make use of the Tate curve $E_{q}$ isomorphic to $E$, which is
described explicitly with the formulae in \cite[\S
C.14]{silverman2009}. From this we can find an explicit isomorphism
$F_{p}^{\times}/q^{\Z} \xrightarrow{\phi} E_{q}(F_{p})$, where $q$ is an
element satisfying $|q| < 1$ generating a discrete subgroup.  An
approximation to
$\alpha \defeq \exp_{p}(-\lambda_{f}/L_{\mathrm{alg}}(1,f))$ can then be mapped
to a point on the Tate curve $E_{q}(F_{p})$. Mapping further into
$E(F_{p})$, we a compute using descent a generating set $\{g\}$ for
$E(H)$ and attempt to write the image of $\alpha$ as an integral
combination of them. Since $P_{\psi,f}$ is only defined up to torsion, it
is reasonable to look for a dependence between the formal logarithms
of $\alpha$ and the generators $\{g\}$. To ensure convergence of the
corresponding power series, we replace $\alpha$ by $\alpha^{p-1}$ and each $g$ by
$(p-1)g$. Then we look for an integer relation by applying the
\texttt{LLL}-algorithm to a suitable lattice as in the previous section.
Following the convention in \texttt{pari/gp}, we call this step \texttt{lindep}.

In summary, we have the following algorithm:

\begin{algorithm}[H]
\DontPrintSemicolon
\KwInput{
  \begin{itemize}[itemsep=-4pt]
  \item A normalised eigenform $f$ in $M_{2}(\Gamma_{0}(p))$ with Hecke
    field $\Q$,
  \item an elliptic curve $E$ with associated eigenform $f$,
  \item $\lambda_{f} \in (\Z/p^{m}\Z)^{2}$ an approximation to
    $- L_{\mathrm{alg}}(1,f) \log_{E_{f}}(P_{\psi,f}) \in F_{p}$,
\end{itemize}}
  \KwOutput{The point $P_{\psi,f}$ on the elliptic curve $E$}\;
  $E_{q} \gets \mathtt{TateCurve}(E)$ \tcp*{Using formulae in \cite[\S C.14]{silverman2009}}
  $\phi \gets \mathtt{Isomorphism}(F_{p}^{\times}/q^{\Z}, E_{q})$ \tcp*{As in \cite[Thm.~14.1]{silverman2009}}
  $\beta \gets \phi(-\lambda_{f}/L_{\mathrm{alg}}(1,f))$\;
  $H \gets \mathtt{NarrowHilbertClassField}(F)$\;
  $E(H) \gets \mathtt{MordellWeilGroup}(E/H)$\;
  
  $L \gets [\log_{E_{q}}((p-1)\beta)]$\;
  \tcp{Compute formal logarithms of
        non-torsion generators of $E(H)$:}
  \For{$g \in \mathtt{Generators}(E(H))$}{
    \If{$\mathtt{Order(g) == 0}$}{
      $L \gets L \cup \{ \log_{E} ((p-1)g)\}$ \; 
    }
  }
  $(n_{1},(n_{g})) \gets \mathtt{lindep}(L)$ \tcp*{Find integer relation between formal
    logarithms using \texttt{LLL}.}
 \Return $\sum_{g} n_{g}\cdot g /n_{1} \in E(H)$ \;
\caption{Find Stark--Heegner point $P_{\psi,f}$ from $\lambda_{f}$ \label{alg:stark-heegner}}
\end{algorithm}
By linearity, the algorithm works equally well when $\lambda_{f}$ comes from
$\partial f^{+}_{Q}$, in which case the corresponding Stark--Heegner point is
a weighted sum of points $P_{\psi,f}$. The rational part of the
$L$-value can be computed either directly in \texttt{magma} using the
intrinsic \texttt{LRatio}, or by using the BSD formula and the
invariants of $E$ since $L(s,f) = L(s,E)$, or even analytically by
approximating $L(1,E)$ and computing the period integrals of $E$.

One limitation of algorithm \cref{alg:stark-heegner} is that computing
$E(H)$ is very slow when $[H:\Q] > $. We hope to resolve this this in
the future by improving the algorithms for detecting polynomials from
$p$-adic approximations to the their roots. 

In the table below we have computed the minimal polynomials of the $X$
and $Y$ coordinates of the Stark--Heegner points coming from
$\partial f^{+}_{\psi}$ on the curve
$E: y^2 + xy + y = x^3 - x^2 - x - 14$. This is a model for
$X_{0}(17)$, for which we have $L_{\mathrm{rat}}(1,f) = 1/4$, so
$\lambda_{f} = -\frac{1}{4}\log_{E}P_{\psi,f}$.

Here $\psi$ runs over each genus character associated with $D$. Since all
the fields $\Q(\sqrt D)$ for $D < 100$ with no fundamental unit of
negative norm such that $\qty(\frac{D}{17}) = -1$ have narrow class
number $2$, there is a unique nontrivial character. This satisfies
$\partial f^{+}_{\psi} = -\partial f^{+}_{Q}$ where $Q$ is a quadratic form with class
corresponding to the inverse different in $\Cl^{+}$.  Note that this
matches the table on p.~545 of \cite{darmon2021}.

\begin{table}[H]
  \centering
  \def\arraystretch{1.2}
\begin{tabular}{c|c|c}
$D$ & $X$ & $Y$ \\ \hline
$12$ & $x^{2} - 6 x + 10$ & $x^{2} - 2 x + 10$ \\
$24$ & $x^{2} + \frac{2}{9} x + \frac{89}{9}$ & $x^{2} + \frac{230}{27} x + 25$ \\
$28$ & $x^{2} - 6 x + 10$ & $x^{2} + 10 x + 41$ \\
$44$ & $x^{2} - 14 x + 338$ & $x^{2} - 26 x + 7394$ \\
$56$ & $x^{2} + \frac{2}{9} x + \frac{89}{9}$ & $x^{2} + \frac{230}{27} x + 25$ \\
$57$ & $x^{2} + \frac{2306}{1225} x + \frac{6521}{1225}$ & $x^{2} + \frac{111042}{42875} x + \frac{15319}{8575}$ \\
$88$ & $x^{2} + \frac{2}{9} x + \frac{89}{9}$ & $x^{2} - \frac{182}{27} x + \frac{401}{9}$ \\
$92$ & $x^{2} - 6 x + 10$ & $x^{2} - 2 x + 10$ \\
\end{tabular}
\caption{Table of Stark--Heegner points on
  $E: y^2 + xy + y = x^3 - x^2 - x - 14$, for $D<100$.}
  \label{tab:SH-17}
\end{table}

\subsection{Tables of Brumer--Stark units}
Below we show some tables of minimal polynomials of Brumer--Stark units
in different ranges. Full tables are in the author's \texttt{github}
repository, \url{https://github.com/havarddj/drd}.

\begin{table}[H]
  \centering
\begin{tabular}{c|c||c|c||c|c}
$D$ & $P_{D}$ & $D$ & $P_{D}$ & $D$ & $P_{D}$ \\ \hline
$44$ & $3x^{2} + 5x + 3$ & $152$ & $3x^{2} + 2x + 3$ & $236$ & $27x^{2} + 5x + 27$ \\
$56$ & $3x^{2} + 2x + 3$ & $161$ & $27x^{2} + 38x + 27$ & $248$ & $27x^{2} - 46x + 27$ \\
$77$ & $3x^{2} + 5x + 3$ & $188$ & $243x^{2} - 298x + 243$ & $284$ & $2187x^{2} - 4090x + 2187$ \\
$92$ & $27x^{2} + 38x + 27$ & $209$ & $3x^{2} + 5x + 3$ & $305$ & $9x^{4} + 5x^{3} + 17x^{2} + 5x + 9$ \\
$140$ & $81x^{4} + 6x^{3} - 149x^{2} + 6x + 81$ & $221$ & $9x^{4} - 2x^{3} - 5x^{2} - 2x + 9$ & $329$ & $243x^{2} - 298x + 243$ \\
\end{tabular}
  \caption{Minimal polynomials of Brumer--Stark units for $p=3$, $D < 330$.}
  \label{tab:BS-units-3}
\end{table}

\begin{table}[H]
  \centering
  \def\arraystretch{1.2}
\begin{tabular}{c|c}
$D$ & $P_{D}$ \\ \hline
$2005$ & $2^{12}x^{8} + 2^{4} \cdot 1055x^{7} + 2^{2} \cdot 9419x^{6} + 57995x^{5} + 66831x^{4} + 57995x^{3} + 2^{2} \cdot 9419x^{2} + 2^{4} \cdot 1055x + 2^{12}$ \\
$2013$ & $2^{30}x^{4} - 2^{3} \cdot 57677665x^{3} - 1118365527x^{2} - 2^{3} \cdot 57677665x + 2^{30}$ \\
$2021$ & $2^{9}x^{6} + 2^{2} \cdot 111x^{5} + 2^{1} \cdot 123x^{4} - 101x^{3} + 2^{1} \cdot 123x^{2} + 2^{2} \cdot 111x + 2^{9}$ \\
$2037$ & $2^{18}x^{4} + 2^{3} \cdot 16215x^{3} - 263887x^{2} + 2^{3} \cdot 16215x + 2^{18}$ \\
$2045$ & $2^{6}x^{4} - 9x^{3} - 65x^{2} - 9x + 2^{6}$ \\
$2077$ & $2^{3}x^{2} + 15x + 2^{3}$ \\
$2085$ & $2^{24}x^{4} - 2^{3} \cdot 6289393x^{3} + 70333881x^{2} - 2^{3} \cdot 6289393x + 2^{24}$ \\
$2093$ & $2^{8}x^{4} - 2^{1} \cdot 217x^{3} + 645x^{2} - 2^{1} \cdot 217x + 2^{8}$ \\
$2101$ & $2^{13}x^{6} + 2^{6} \cdot 79x^{5} - 2^{3} \cdot 1009x^{4} - 10161x^{3} - 2^{3} \cdot 1009x^{2} + 2^{6} \cdot 79x + 2^{13}$ \\
\end{tabular}
  \caption{Minimal polynomials of Brumer--Stark units for $p=2$,
    $2000\le D \le 2101$.}
  \label{tab:BS-units-2}
\end{table}

The coefficients of the polynomials are all of roughly the same
magnitude, despite the strong conditions on the $p$-valuation of the
constant terms. In particular, the logarithmic height of the middle
coefficient is roughly $\ord_{p}(a_{0})$, which is easily computed in
terms of $L$-values using \cref{eq:5}. A classical result of Schur
says that the coefficients of cyclotomic polynomials can be
arbitrarily large. It would be interesting to know whether the
same holds for our polynomials, normalised to be monic. The largest
value we find is $822.637$, across the tables for
$p \in \{2,3,5,7,11\}$. The following plot shows the absolute value of the middle
coefficient of the normalised polynomials against the discriminant for
different $p$:

\begin{figure}[H]
  \centering
  \includegraphics[width=.7\linewidth]{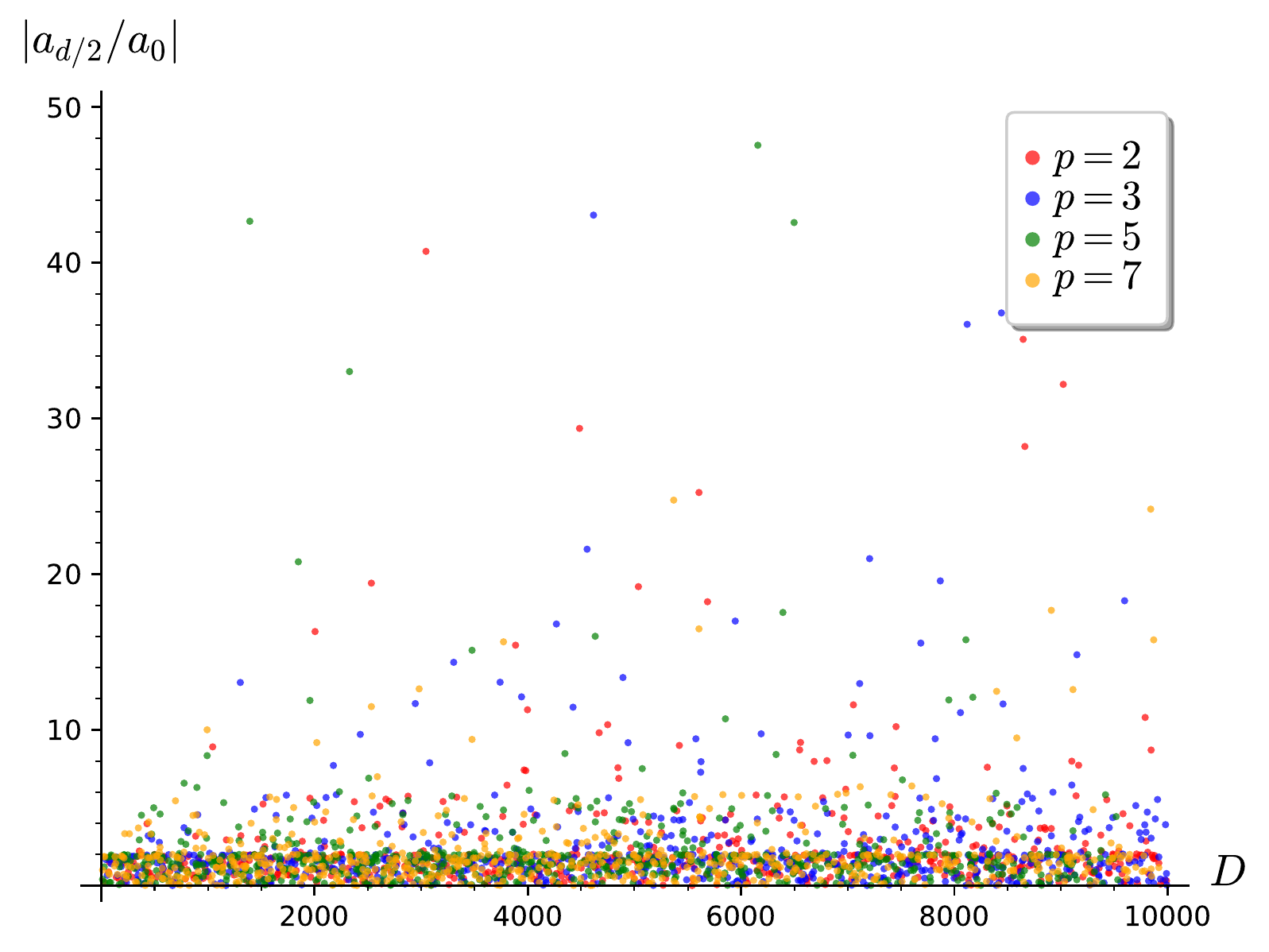}
  \caption{Normalised middle coefficients for various primes $p$.}
  \label{fig:middle-coeffs}
\end{figure}

\addcontentsline{toc}{section}{References}
\printbibliography
\end{document}